\long\def\symbolfootnote[#1]#2{\begingroup%
\def\thefootnote{\fnsymbol{footnote}}\footnote[#1]{#2}\endgroup}
\newtheoremstyle{remark}
  {}{}{}{}{\bfseries}{.}{.5em}{{\thmname{#1 }}{\thmnumber{#2}}{\thmnote{ (#3)}}}
\newtheorem{defi}{Definition}[section]
\newtheorem{tw}[defi]{Theorem}
\newtheorem{lem}[defi]{Lemma}
\newtheorem{cor}[defi]{Corollary}
\newtheorem{prop}[defi]{Proposition}
\newtheorem{ex}[defi]{Example}
\def\vint{\mathop{\mathchoice%
          {\setbox0\hbox{$\displaystyle\intop$}\kern 0.22\wd0%
           \vcenter{\hrule width 0.6\wd0}\kern -0.82\wd0}%
          {\setbox0\hbox{$\textstyle\intop$}\kern 0.2\wd0%
           \vcenter{\hrule width 0.6\wd0}\kern -0.8\wd0}%
          {\setbox0\hbox{$\scriptstyle\intop$}\kern 0.2\wd0%
           \vcenter{\hrule width 0.6\wd0}\kern -0.8\wd0}%
          {\setbox0\hbox{$\scriptscriptstyle\intop$}\kern 0.2\wd0%
           \vcenter{\hrule width 0.6\wd0}\kern -0.8\wd0}}%
          \mathopen{}\int}
\def\={\hspace{-3mm}&=&\hspace{-3mm}}
\newcommand{\pc}{{p(\cdot)}}
\newcommand{\rn}{\mathbb{R}^n}
\newcommand{\wfr}{W^{1,\phi}(\rn)}
\newcommand{\lfr}{L^{\phi}(\rn)}
\newcommand{\lf}{L^{\phi}}
\newcommand{\n}{\mathbb{N}}
\newcommand{\R}{\mathcal{R}}
\renewcommand{\r}{\mathbb{R}}
\newcommand{\loc}{\textnormal{loc}}
\newcommand{\fR}{\mathfrak{R}}
\newcommand{\wR}{\widetilde{R}}
\newcommand{\fiw}{\Phi_{\textnormal{w}}}
\newcommand{\fiwr}{\Phi_{\textnormal{w}}(\rn)}
\newcommand{\fic}{\Phi_{\textnormal{c}}}
\newcommand{\fis}{\Phi_{\textnormal{s}}}
\newcommand{\Az}{\textnormal{{(A0)}}}
\newcommand{\Aj}{\textnormal{{(A1)}}}
\newcommand{\Ad}{\textnormal{{(A2)}}}
\newcommand{\ainc}{\textnormal{{(aInc)}}}
\newcommand{\adec}{\textnormal{{(aDec)}}}
\let \phi \varphi
\begin{document}

\date{}

\title{\bf Maximal operator in Musielak--Orlicz--Sobolev spaces}

\author{Piotr Micha{\l} Bies, Micha{\l} Gaczkowski, Przemys{\l}aw G\'{o}rka \bigskip}

\begin{abstract}
We study the Hardy-Littlewood maximal operator in the Musielak-Orlicz-Sobolev space $W^{1,\phi}(\mathbb{R}^n)$. Under some natural assumptions on $\phi$ we show that the maximal function is bounded and continuous in $W^{1,\phi}(\mathbb{R}^n)$.
\end{abstract}
\maketitle
\bigskip

\noindent
{\bf Keywords}: maximal operator, Musielak--Orlicz--Sobolev spaces
\medskip

\noindent
\emph{Mathematics Subject Classification (2020):42B25; 	46E35}

\medskip
\section{Introduction}

The Hardy-Littlewood maximal operator $M$ plays a central role in mathematics, e.g. in the theory of function spaces, harmonic analysis and PDE. The boundedness of $M$ in various types of function spaces is a central issue. Classical theorem of Hardy, Littlewood and Wiener asserts that for $1<p \leq \infty$, the maximal operator is bounded on $L^{p}(X, d, \mu)$, where $(X, d, \mu)$ is a doubling metric measure space (see e.g. \cite{Heinonen}). The maximal operator has been also studied in different function spaces, e.g.: Banach function spaces \cite{Sh}, Sobolev spaces \cite{K}, Lebesgue spaces with variable exponent \cite{Diening}, Musielak-Orlicz spaces \cite{Hasto}. Moreover, $M$ is bounded in H\"older spaces $C^{0,s}(X)$ (see \cite{Bu}), where $(X, d, \mu)$ satisfies the so-called $\delta$-annular decay property\footnote{If no annular decay property is assumed, then $Mf$ can fail to be continuous, even if $f$ is Lipschitz continuous (see Example 1.4. in \cite{Bu}). } and $\mu$ is doubling (see also \cite{Gorka} in the space of continuous functions $C(X)$ and \cite{BGG} in the variable exponent H\"older spaces).  

Let us observe that since $M$ is sublinear and bounded in $L^p$, we get the continuity of the maximal operator in $L^p$. The same holds in the space of bounded and continuous functions $C(X)$. Iwaniec asked the following question {\cite[Question 3]{HO}}: \textit{Is the Hardy–Littlewood maximal operator continuous in $W^{1,p}(\mathbb{R}^n)$, $1 < p < \infty$?} The positive answer to this question is given in \cite{Luiro}. The main objective of the paper is to study the maximal operator in the Musielak-Orlicz-Sobolev spaces $W^{1,\phi}(\mathbb{R}^n)$. Under some natural assumptions on $\phi$ we show that maximal operator is bounded and continuous in $W^{1,\phi}(\mathbb{R}^n)$.

The remainder of the paper is structured as follows. In Section 2, we introduce the notations and recall the definitions. In Section 3 we prove boundedness of the maximal operator in the Musielak-Orlicz-Sobolev spaces. Auxiliary results, which are needed in the proof of the main result, are contained in Section 4. Our principal assertion, concerning the continuity of the maximal operator in the Musielak-Orlicz-Sobolev is formulated and proven in the last section.

\section{Preliminaries}

Let $A\subset\rn$ be a measurable set, by $|A|$ we denote the Lebesgue measure of $A$ and by $L^0(A)$ the set of measurable functions on $A$.

Let $\phi:A\times [0,\infty)\to[0,\infty)$, $p, q >0$. We say that $\phi$ satisfies $\ainc_p$ if there exists $a\in [1,\infty)$ such that the inequality holds 
$$
\frac{\phi(x,s)}{s^p}\leq a\frac{\phi(x,t)}{t^p} \quad \text{for almost all $x\in A$ and for all $0<s<t$},
$$
 and we say that $\phi$ satisfies $\adec_q$ if there exists $a\in [1,\infty)$ such that the inequality holds
$$
\frac{\phi(x,t)}{t^q}\leq a\frac{\phi(x,s)}{s^q}\quad \text{for almost all $x\in A$ and for all $0<s<t$}.
$$
 Furthermore, we denote 
\[
	\ainc\ = \bigcup_{p \in (1, \infty)} \ainc_p, \,\,\adec\ = \bigcup_{p \in (1,\infty)} \adec_p.
\]
We say that $\phi: A\times[0,\infty)\to[0,\infty]$ is a $\Phi$-prefunction if $\phi(x,0)=0$, $\phi(x,\cdot)$ is increasing, $\lim_{t\to 0^+}\phi(x,t)=0$, $\lim_{t\to\infty}\phi(x,t)=\infty$ for almost every $x\in A$ and the map $x\mapsto \phi(x,|f(x)|)$ is measurable for $f\in L^0(A)$. We say that a $\Phi$-prefunction $\phi$ is a
\begin{itemize}
\item weak $\Phi$-function if it satisfies $\ainc_1$,
\item convex $\Phi$-function if $\phi(x,\cdot)$ is left continuous and convex for almost every $x\in A$,
\item strong $\Phi$ function if $\phi(x,\cdot)$ is continuous and convex for almost every $x\in A$.
\end{itemize}
The set of weak $\Phi$-functions, convex $\Phi$-functions and strong $\Phi$-functions we shall denote by $\fiw(A)$, $\fic(A)$ and $\fis(A)$ respectively. From the very definition we 
have $\fis(A)\subset\fic(A)\subset\fiw(A)$. For $\phi\in\fiw(A)$ and $f\in L^0(A)$ we define
$$
\rho_{\phi}(f)=\int_A\phi(x,|f(x)|)\;dx,
$$ 
$$
\|f\|_{\phi,A}=\inf \left\{\lambda>0\;:\;\rho_{\phi}\left(\tfrac{f}{\lambda}\right)\leq 1\right\},
$$
and a set
$$
\lf(A)=\{f\in L^0(A)\;:\;\exists(\lambda>0)\ \rho_{\phi}(\lambda f)<\infty\}.
$$
We shall simply write $\|f\|_\phi$ when $A=\rn$. Let us note that the Musielak-Orlicz space $(\lf,\|\cdot\|_{\phi,A})$ is a quasi-Banach space for $\phi\in\fiw(A)$, and $(\lf,\|\cdot\|_{\phi,A})$ is a Banach space if $\phi\in\fic(A)$. It is known (see {\cite[Proposition 3.2.4]{genOrlicz}), that if $\phi,\psi\in\fiw(A)$ and $\phi\simeq\psi$, then\footnote{Let $\phi,\psi :A\times [0,\infty)\to[0,\infty]$. We say that $\phi$ and 
$\psi$ are equivalent ($\phi\simeq \psi$) if there exists $L\geq 1$ such that the inequalities $\psi(x,t/L)\leq \phi(x,t)\leq\psi(x,L t)$
are satisfied for almost every $x\in A$ and for all $t\in [0,\infty)$.}  $\lf(A)=L^\psi(A)$ and corresponding quasi-norms are equivalent. Moreover, if $\phi\in\fiw(A)$, then there exists $\psi\in\fis(A)$ such that $\phi\simeq\psi$. Thus, even if $\|\cdot\|_{\phi,A}$ is not a norm for a certain $\phi\in\fiw(A)$ it has a Banach space structure. 
Let us recall {\cite[Lemma 3.1.3]{genOrlicz}} that if $\phi\in\fiw(A)$ satisfies \adec, then 
$$
\lf(A)=\left\{f\in L^0(A)\,:\, \rho_{\phi}(f)<\infty\right\}.
$$
On the other hand {\cite[Theorem 3.6.6]{genOrlicz}}, if $\phi\in\fiw(A)$ satisfies \ainc\ and \adec, then $\lf(A)$ is reflexive.

If $\phi\in\fiw(A)$ satisfies $\ainc_p$ with $1\leq p<\infty$, then quantities $\|. \|_{\phi}$ and $\rho_{\phi}(. )$ can be compared using the inequalities (see {\cite[Corollary 3.2.10]{genOrlicz}})
\begin{eqnarray} \label{osznorm}
\min\{(\tfrac{1}{a}\rho_{\phi}(f))^{1/p},1\}\leq\|f\|_{\phi}\leq\max\{(a\rho_{\phi}(f))^{1/p},1\},
\end{eqnarray}
where $a$ is a constant from $\ainc_p$. Let us recall that if $\phi\in\fiw(A)$, then for $f,g\in L^0(A)$ the following H\"older inequality 
$$
\int_A |f(x)||g(x)|\; dx\leq 2\|f\|_{\phi,A}\|g\|_{\phi^*,A},
$$
holds.\footnote{For $\phi:A\times [0,\infty)\to [0,\infty]$ we define the conjugate $\phi^*$ as follows $\phi^*(x,t)=\sup_{s\geq 0}(ts-\phi(x,s))$.}

\begin{defi}
Let $A\subset\rn$ be a measurable set. We say that $\phi\in\fiw(A)$ satisfies \Az\ if there exists a constant $\beta\in (0,1]$ such that $\beta\leq\phi^{-1}(x,1)\leq{1}/{\beta}$ for almost everyl $x\in A$, where $\phi^{-1}$ is left-inverse of $\phi$ (see {\cite[Definition 2.3.1]{genOrlicz}}).
\end{defi}

\begin{prop}[{\cite[Corollary 3.7.4]{genOrlicz}}]\label{azrow}
Let $A\subset\rn$ be a measurable set and let $\phi\in\fiw(A)$. Then $\phi$ satisfies \Az\ if and only if there exists $\beta\in (0,1]$ such that 
$\phi(x,\beta)\leq 1\leq\phi(x,1/\beta) $ for almost every $x\in A$.
\end{prop}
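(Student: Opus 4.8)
The plan is to argue directly from the definition of the left-inverse, which for a.e.\ $x$ reads $\phi^{-1}(x,t)=\inf\{s\geq 0:\phi(x,s)\geq t\}$, combined only with the monotonicity of $\phi(x,\cdot)$. First I would record two elementary consequences, valid for a.e.\ $x$ and all $s,t\geq 0$: (i) $\phi(x,s)\geq t$ implies $\phi^{-1}(x,t)\leq s$, because $s$ then lies in the set defining the infimum; and (ii) $\phi(x,s)<t$ implies $\phi^{-1}(x,t)\geq s$, because monotonicity gives $\phi(x,s')\leq\phi(x,s)<t$ for every $s'\leq s$, so none of these $s'$ belongs to that set. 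Their contrapositives, $\phi^{-1}(x,t)>s\Rightarrow\phi(x,s)<t$ and $\phi^{-1}(x,t)<s\Rightarrow\phi(x,s)\geq t$, are what I would actually use.

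For the forward implication I would assume $\beta\leq\phi^{-1}(x,1)\leq 1/\beta$ a.e.\ and test the contrapositives at $t=1$ with $s=\beta/2$ and $s=2/\beta$. Since $\beta/2<\beta\leq\phi^{-1}(x,1)$ and $\phi^{-1}(x,1)\leq 1/\beta<2/\beta$, this gives $\phi(x,\beta/2)<1$ and $\phi(x,2/\beta)\geq 1$ for a.e.\ $x$. The constant $\beta':=\beta/2\in(0,1]$ then satisfies $\phi(x,\beta')\leq 1\leq\phi(x,1/\beta')$, which is the required inequality.

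For the converse I would start from $\phi(x,\beta')\leq 1\leq\phi(x,1/\beta')$. The upper estimate $\phi^{-1}(x,1)\leq 1/\beta'$ is immediate from (i). The lower estimate is the main obstacle: because $\phi(x,\beta')\leq 1$ need not be strict, a flat piece or an upward jump of $\phi$ at height $1$ could in principle make $\phi^{-1}(x,1)$ smaller than $\beta'$, and the pointwise bound coming from $\lim_{s\to 0^+}\phi(x,s)=0$ is not uniform in $x$. I would resolve this using the weak $\Phi$-function property $\ainc_1$, with constant $a\geq 1$: for $0<s<\beta'$ one has $\tfrac{\phi(x,s)}{s}\leq a\,\tfrac{\phi(x,\beta')}{\beta'}\leq\tfrac{a}{\beta'}$, so $\phi(x,s)<1$ whenever $s<\beta'/a$, and (ii) then yields $\phi^{-1}(x,1)\geq\beta'/a$. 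Taking $\beta:=\beta'/a\in(0,1]$ and using $a\geq 1$ gives $\beta\leq\phi^{-1}(x,1)\leq 1/\beta'\leq 1/\beta$ a.e., i.e.\ \Az, completing the equivalence.
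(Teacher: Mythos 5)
Your proof is correct, but there is nothing in the paper to compare it against: the paper states this proposition purely as a citation (Corollary 3.7.4 of \cite{genOrlicz}) and gives no proof of its own, so your argument serves as a self-contained substitute rather than an alternative to an in-paper proof. Judged on its own merits, it is sound. The two contrapositive facts about the left inverse $\phi^{-1}(x,t)=\inf\{s\geq 0:\phi(x,s)\geq t\}$ are correctly derived from monotonicity; the forward direction (which, as you note, uses only monotonicity, at the harmless cost of replacing $\beta$ by $\beta/2$) checks out; and you correctly located the only genuine difficulty in the converse: $\phi(x,\beta')\leq 1$ alone does not bound $\phi^{-1}(x,1)$ from below, since $\phi(x,\cdot)$ may already reach height $1$ at some $s_0(x)<\beta'$ with no uniform-in-$x$ control on $s_0(x)$. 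Your use of $\ainc_1$ --- namely $\phi(x,s)\leq as/\beta'<1$ for $s<\beta'/a$, whence $\phi^{-1}(x,1)\geq\beta'/a$, and then $\beta:=\beta'/a\in(0,1]$ --- is exactly the quantitative fix, and it is the one step where the hypothesis $\phi\in\fiw(A)$, rather than a bare $\Phi$-prefunction, is genuinely needed. This is essentially how the cited source handles the matter as well (through $\ainc_1$-corrected inequalities relating $\phi$ and $\phi^{-1}$), so your route is the standard one, specialized to $t=1$.
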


\begin{prop}[{\cite[Corollary 3.7.9]{genOrlicz}}]\label{lfzan}
Let $A\subset\rn$ be a measurable set with finite measure. If $\phi \in\fiw(A)$ satisfies \Az and $\ainc_p$ with $1\leq p<\infty$, then $\lf(A)\hookrightarrow L^p(A)$.
\end{prop}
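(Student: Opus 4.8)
The plan is to extract from the hypotheses a pointwise lower bound for $\phi$ of the form $\phi(x,t)\ge c\,t^p$ for large $t$, and then to integrate. First I would invoke Proposition~\ref{azrow}: the assumption \Az\ provides $\beta\in(0,1]$ with $\phi(x,1/\beta)\geq 1$ for almost every $x\in A$. Feeding $s=1/\beta<t$ into $\ainc_p$ (with constant $a$) then gives, for a.e. $x\in A$ and every $t\geq 1/\beta$,
$$
\phi(x,t)\geq \frac{1}{a}(\beta t)^p\,\phi(x,1/\beta)\geq \frac{\beta^p}{a}\,t^p .
$$
Since trivially $t^p\leq \beta^{-p}$ whenever $t<1/\beta$, combining the two cases yields a single inequality valid for all $t\geq 0$ and a.e. $x$, namely $t^p\leq \beta^{-p}\big(1+a\,\phi(x,t)\big)$.

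Next I would apply this inequality with $t=|f(x)|/\lambda$ for a parameter $\lambda>0$ and integrate over $A$. Using $|A|<\infty$ this gives
$$
\lambda^{-p}\int_A |f|^p\,dx\leq \beta^{-p}\big(|A|+a\,\rho_{\phi}(f/\lambda)\big).
$$
For any $\lambda>\|f\|_{\phi,A}$ the definition of the Luxemburg quasinorm together with the monotonicity of $\phi(x,\cdot)$ forces $\rho_{\phi}(f/\lambda)\leq 1$, so the right-hand side is at most $\beta^{-p}(|A|+a)$. Multiplying by $\lambda^p$ and letting $\lambda\downarrow\|f\|_{\phi,A}$ produces
$$
\|f\|_{L^p(A)}\leq \beta^{-1}\big(a+|A|\big)^{1/p}\,\|f\|_{\phi,A},
$$
which is exactly the claimed embedding (the case $\|f\|_{\phi,A}=0$, which forces $f=0$ a.e., is covered by instead letting $\lambda\downarrow 0$).

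The argument is short, and I expect no genuine obstacle, only two bookkeeping points to verify carefully. The first is the passage to $\rho_{\phi}(f/\lambda)\le 1$: it is immediate because $|f|/\lambda\le |f|/\mu$ for some $\mu<\lambda$ with $\rho_{\phi}(f/\mu)\le 1$, and $\phi(x,\cdot)$ is increasing. The second, and the only place the finite-measure hypothesis enters, is the term $\beta^{-p}|A|$ arising from the range $\{|f|<\lambda/\beta\}$; without $|A|<\infty$ this contribution need not be integrable, which is precisely why the embedding can fail on sets of infinite measure. (Alternatively one could route the whole computation through the norm–modular comparison \eqref{osznorm}, but the direct integration above keeps all constants explicit.)
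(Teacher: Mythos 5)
Your proof is correct. Note, however, that the paper does not actually prove Proposition~\ref{lfzan}: it is imported verbatim from \cite[Corollary 3.7.9]{genOrlicz}, so there is no internal argument to compare yours against; what you give is a correct, self-contained, elementary derivation, and it follows the standard mechanism behind that corollary. Concretely, Proposition~\ref{azrow} converts \Az\ into the normalization $\phi(x,1/\beta)\geq 1$ a.e., plugging $s=1/\beta$ into $\ainc_p$ yields the coercivity bound $\phi(x,t)\geq \beta^p t^p/a$ for $t\geq 1/\beta$, and the remaining range $t<1/\beta$ is exactly where the hypothesis $|A|<\infty$ is consumed, producing the term $\beta^{-p}|A|$ after integration; this gives the explicit constant $\beta^{-1}(a+|A|)^{1/p}$. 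Your two bookkeeping points are handled correctly: $\rho_\phi(f/\lambda)\leq 1$ for every $\lambda>\|f\|_{\phi,A}$ uses only the monotonicity of $\phi(x,\cdot)$ (no left-continuity is needed, which matters since $\phi$ is merely a weak $\Phi$-function), and the degenerate case $\|f\|_{\phi,A}=0$ is disposed of by letting $\lambda\downarrow 0$ in the same inequality, which forces $f=0$ a.e. The only implicit standing fact you rely on is that $\|f\|_{\phi,A}<\infty$ for every $f\in\lf(A)$, so that the set of admissible $\lambda$ is nonempty; this is standard (it follows by dominated convergence, since $\phi(x,|f(x)|/k)\to 0$ pointwise as $k\to\infty$ and is dominated by the integrable function $\phi(x,\lambda_0|f(x)|)$ for a suitable $\lambda_0$), and is part of the quasi-Banach structure of $\lf(A)$ that the paper already takes for granted, so it is not a gap. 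Your route via direct integration and your suggested alternative via the norm--modular comparison (\ref{osznorm}) are both legitimate; the direct one has the advantage of keeping the embedding constant explicit.
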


\begin{defi}
Let $\Omega\subset\rn$ be an open set and $\phi\in\fiw(\Omega)$, then
\begin{enumerate}
\item 
$\phi$ satisfies \Aj\ if there exists $\beta\in (0,1)$ such that for every ball $B$ such that $|B|\leq 1$ 
the following inequality holds
$$
\beta\phi^{-1}(x,t)\leq\phi^{-1}(y,t)
$$
for every $t\in [1,1/|B|]$ and for almost every $x,y\in B\cap \Omega$.
\item $\phi$ satisfies \Ad\ if for all $s>0$ there exist $\beta\in (0,1]$ and $h\in L^1(\Omega)\cap L^{\infty}(\Omega)$ such that the following inequality holds
$$
\beta \phi^{-1}(x,t)\leq\phi^{-1}(y,t)
$$
for almost every $x,y\in\Omega$ and for all $t\in [h(x)+h(y),s]$.
\end{enumerate}
\end{defi}
Next, we define  the Musielak--Orlicz--Sobolev spaces. Let $\Omega$ be an open subset of $\mathbb{R}^n$ and let $\phi\in\fiw(\Omega)$. 
The Musielak--Orlicz--Sobolev space $W^{1, \phi}(\Omega)$ is a vector space of all $f \in \lf(\Omega)$ for which the distributional derivatives
 belong to $\lf(\Omega)$. We equip $W^{1, \phi}(\Omega)$ with the quasi-norm 
\begin{eqnarray*}
\| u \|_{k,\phi,\Omega} := \sum_{|\alpha|\leq 1} \| D_\alpha u \|_{\phi,\Omega}.
\end{eqnarray*}
Again, we will write simply $\|u\|_{1,\phi}$ if $\Omega=\rn$. If $\phi\in\fiw(\Omega)$ satisfies \Az, \ainc\ and \adec, then the Musielak--Orlicz--Sobolev space is reflexive \cite[Theorem 6.1.4 and Theorem 3.7.13]{genOrlicz}.

If $f$ is locally integrable and $A$ is a measurable set such that $0<|A|<\infty$, then we denote the integral average of the function $f$ over $A$ as  
\[
  \vint_A f\,dx= \frac{1}{|A|}\int_A f\,  dx\,.
\]
Finally, we formulate and prove the following proposition.
\begin{prop}\label{propjedn}
Let $\phi \in \fiwr$ satisfies \adec\ and let $f\in\lfr$. Then
\begin{itemize}
\item[(i)] For $\epsilon>0$ there exists $R>0$ such that 
$$
\|f\|_{\phi,\rn\setminus B(0,R)}<\epsilon.
$$
\item[(ii)] For $\epsilon>0$ there exists $\lambda>0$ such that for any measurable set $A\subset\rn$ such that $|A|<\lambda$ the following inequality 
$$
\|f\|_{\phi,A}<\epsilon
$$
is satisfied.
\end{itemize}
\end{prop}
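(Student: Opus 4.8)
The plan is to reduce both parts to the finiteness of the modular of $f$ combined with the scaling built into $\adec$. Since $f\in\lfr$ and $\phi$ satisfies $\adec$, Lemma 3.1.3 quoted above gives $\rho_\phi(f)=\int_{\rn}\phi(x,|f(x)|)\,dx<\infty$; equivalently, the nonnegative function $g:=\phi(\cdot,|f(\cdot)|)$ lies in $L^1(\rn)$. Both claims will follow from elementary properties of $g$.

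The key is the following scaling estimate. Suppose $\phi$ satisfies $\adec_q$ with constant $a$, and fix $\delta\in(0,1)$. For almost every $x$ with $f(x)\ne 0$ I apply the $\adec_q$ inequality with $s=|f(x)|$ and $t=|f(x)|/\delta>s$ to obtain $\phi(x,|f(x)|/\delta)\le a\delta^{-q}\phi(x,|f(x)|)$, an inequality that is trivial when $f(x)=0$. Integrating over an arbitrary measurable set $A$ yields $\int_A\phi(x,|f(x)|/\delta)\,dx\le a\delta^{-q}\int_A g\,dx$. In particular, if $\int_A g\,dx\le \delta^q/a$, then $\int_A\phi(x,|f(x)|/\delta)\,dx\le 1$, so by the very definition of the Luxemburg quasi-norm $\|f\|_{\phi,A}\le\delta$.

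With this reduction both statements are immediate. Given $\epsilon>0$ I may assume $\epsilon<2$, since the conclusion for small $\epsilon$ implies it for larger $\epsilon$; put $\delta:=\epsilon/2\in(0,1)$, so it suffices to force $\int_A g\,dx\le\delta^q/a$, whence $\|f\|_{\phi,A}\le\delta<\epsilon$. For (i), as $g\in L^1(\rn)$ the tails $\int_{\rn\setminus B(0,R)}g\,dx$ tend to $0$ as $R\to\infty$, so I choose $R$ making this tail at most $\delta^q/a$. For (ii), the absolute continuity of the integral of the $L^1$ function $g$ furnishes $\lambda>0$ such that $\int_A g\,dx\le\delta^q/a$ for every measurable $A$ with $|A|<\lambda$, which is exactly what is needed.

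The only step requiring a little care---though it is short---is the direction of the $\adec_q$ inequality: one dilates by $\delta<1$, enlarging the argument of $\phi$, precisely so that the estimate bounds the modular of $f/\delta$ by a constant multiple of the modular of $f$, and not the other way around. The remaining ingredients (passing from the modular to the quasi-norm through the Luxemburg definition, and the two standard absolute-continuity facts for an $L^1$ function) are routine.
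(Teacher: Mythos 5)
Your proof is correct and follows essentially the same route as the paper: both reduce to finiteness of a modular under \adec, then invoke tail decay (for (i)) or absolute continuity (for (ii)) of an $L^1$ function, and conclude via the definition of the Luxemburg quasi-norm. The only difference is cosmetic: the paper applies the quoted Lemma 3.1.3 directly to the dilated function $f/(\epsilon/2)$ to get $\rho_\phi(2f/\epsilon)<\infty$, whereas you apply it to $f$ itself and then re-derive the dilation estimate inline from the $\adec_q$ inequality.
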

\begin{proof}
Since the proof of (ii) is similar to the proof of (i), we shall give the proof of (i). Let $\epsilon>0$, then since \adec\ is satisfied, we have
$$
\int_{\rn}\phi\left(x,\tfrac{|f(x)|}{\epsilon/2}\right)\,dx<\infty.
$$
Thus, there exists $R>0$ such that 
$$
\int_{\rn\setminus B(0,R)}\phi\left(x,\tfrac{|f(x)|}{\epsilon/2}\right)\,dx\leq 1.
$$
Therefore, from the very definition of the norm we have $\|f\|_{\phi,\rn\setminus B(0,R)}<\epsilon$.
\end{proof}
We close this section with couple of examples taken from \cite{genOrlicz}.
\begin{ex}
Let $\phi_0(x,t)=\phi(t)$, where $\phi : [0, \infty) \rightarrow [0,\infty)$ is such that  $\phi_0 \in \fiw$. Then, $\phi_0$ satisifes \Az, \Aj, \Ad. 
Moreover,  $\phi_0$ satisifes \ainc, if $\phi$ satisfies $\nabla_2$ and $\phi_0$ satisifes \adec, if $\phi$ satisfies $\Delta_2$.
\end{ex}

\begin{ex}
Let $\phi_1(x,t)=t^{p(x)}$, where $p:\Omega \rightarrow [1,\infty)$ is a measurable map, then 
$\phi_1$ satisifes \Az,  $\phi_1$ satisifes \Aj, if $\frac{1}{p}\in C^{\log}$, i.e., there exists $C$ such that for every distinct $x, y \in \Omega$ 
\[
	\left|\frac{1}{p(x)}-\frac{1}{p(y)}\right|\leq \frac{C}{\log (e +1/|x-y|)},
\]
$\phi_1$ satisifes \Ad, if $\frac{1}{p}$ satisfies $\log$-H\"older decay condition, i.e., there exist $C, p_{\infty}$ such that
\[
	\left|\frac{1}{p(x)}-\frac{1}{p_{\infty}}\right|\leq \frac{C}{\log (e +|x|)}.
\]
Furthermore, $\phi_1$ satisifes \ainc, if $p_{-}>1$ and  $\phi_1$ satisifes \adec, if $p_{+}<\infty$.
\end{ex}

\begin{ex}
Let $\phi_2(x,t)=t^{p}+a(x)t^q$, where $q>p\geq 1$, then 
 $\phi_2$ satisifes \Az, if $a \in L^{\infty}$,  $\phi_2$ satisifes \Aj, if $a \in C^{0, \frac{n}{p}(q-p)}$,  $\phi_2$ always satisifes \Ad, and  $\phi_2$ satisifes \ainc, if $p>1$ and  $\phi_2$ satisifes \adec, if $q<\infty$.
\end{ex}

\section{Boundedness of the maximal operator}
For $f\in L^1_{\loc}(\rn)$ we define the maximal function $Mf\colon\rn\to\r$ in a standard way 
$$
Mf(x)=\sup_{r>0}\vint_{B(x,r)}|f(z)|\,dz.
$$
Let us recall the crucial theorem about boudedness of maximal function in the Musielak-Orlicz spaces.

\begin{tw}[{\cite[Theorem 4.3.4]{genOrlicz}}]\label{lfmax}
Let $\phi\in\fiwr$ satisfies \Az, \Aj, \Ad\ and \ainc. Then, the maximal operator
$$
M\colon \lfr\to\lfr
$$
is bounded.
\end{tw}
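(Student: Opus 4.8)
The plan is to deduce the norm bound $\|Mf\|_\phi\lesssim\|f\|_\phi$ from a single modular estimate, and to prove that estimate by combining a Jensen-type key inequality with the classical Hardy--Littlewood--Wiener theorem on $L^p$. Since $M(\lambda f)=\lambda\,Mf$ for $\lambda>0$, and since $\ainc_p$ holds for some $p>1$, the comparison \eqref{osznorm} between norm and modular reduces everything to the claim that there exist constants $\beta\in(0,1)$ and $C\geq 1$, independent of $f$, such that
$$
\rho_{\phi}(\beta Mf)\leq C\qquad\text{whenever}\qquad\rho_{\phi}(f)\leq 1 .
$$
Rescaling $f$ by $\|f\|_\phi$ then yields the operator bound with an explicit constant.

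The technical core is the key estimate, and the first move I would make is to exploit $\ainc_p$ by passing to the auxiliary function $\Psi:=\phi^{1/p}$. From $\ainc_p$ for $\phi$ it follows that $\Psi$ satisfies $\ainc_1$, hence is itself a weak $\Phi$-function, and since $\Psi^{-1}(x,t)=\phi^{-1}(x,t^p)$ one checks that $\Psi$ inherits \Az, \Aj\ and \Ad\ from $\phi$ (the change of range $t\mapsto t^p$ being absorbed by standard ball-rescaling manipulations). For a weak $\Phi$-function with these regularity properties one has a Jensen-type inequality with controlled error: there exist $\beta\in(0,1)$ and $h\in L^1(\rn)\cap L^\infty(\rn)$ such that, for every ball $B$, every $x\in B$, and every $f$ with $\rho_{\phi}(f)\leq 1$,
$$
\Psi\!\left(x,\beta\vint_B|f(y)|\,dy\right)\leq\vint_B\Psi(y,|f(y)|)\,dy+\vint_B h(y)\,dy .
$$
Here \Aj\ controls the oscillation of $\Psi^{-1}(\cdot,t)$ across small balls, \Ad\ governs the behaviour at infinity, and \Az\ supplies the normalization gluing the two ranges of $t$ together. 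Raising to the power $p$ and taking the supremum over all balls $B\ni x$ (using that $\phi(x,\cdot)$ is increasing) turns this into the pointwise bound
$$
\phi(x,\beta Mf(x))\leq\Big(M\big(\phi(\cdot,|f|)^{1/p}\big)(x)+Mh(x)\Big)^p .
$$

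It then remains to integrate. Writing $G:=\phi(\cdot,|f|)^{1/p}$, the normalization gives $\|G\|_{L^p}^p=\rho_{\phi}(f)\leq 1$, so $G\in L^p(\rn)$, while $h\in L^1\cap L^\infty\subset L^p$. Since $p>1$, the Hardy--Littlewood--Wiener theorem yields
$$
\rho_{\phi}(\beta Mf)=\int_{\rn}\phi(x,\beta Mf(x))\,dx\lesssim\|MG\|_{L^p}^p+\|Mh\|_{L^p}^p\lesssim\|G\|_{L^p}^p+\|h\|_{L^p}^p\leq 1+\|h\|_{L^p}^p=:C,
$$
which is exactly the modular claim; feeding it back through \eqref{osznorm} and the homogeneity of $M$ completes the argument.

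I expect the main obstacle to be the key estimate itself, namely replacing $\Psi(x,\cdot)$ by $\Psi(y,\cdot)$ underneath the average while producing a single pair $(\beta,h)$ valid for all balls at once: small balls are handled by \Aj, the decay at infinity by \Ad, and the matching of the respective $t$-ranges by \Az, so that essentially all of the hypotheses are consumed here. The conceptual point --- and the reason the passage to $\Psi=\phi^{1/p}$ is forced on us --- is that $M$ fails to be bounded on $L^1$; the strict inequality $p>1$ built into $\ainc$ is precisely what lets us factor the estimate through $L^p$, where Hardy--Littlewood--Wiener applies.
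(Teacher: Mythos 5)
The first thing to say is that the paper contains no proof of this statement to compare against: Theorem \ref{lfmax} is imported wholesale from the literature (it is Theorem 4.3.4 of \cite{genOrlicz}), and the paper only states it as a known tool. So the relevant benchmark is the proof in that reference, and your outline does reconstruct its architecture faithfully: the reduction of the norm bound to a uniform modular bound via (\ref{osznorm}) and the homogeneity of $M$ is sound; the passage to $\Psi=\phi^{1/p}$, the pointwise bound $\phi(x,\beta Mf(x))\leq\big(M(\phi(\cdot,|f|)^{1/p})(x)+Mh(x)\big)^p$, and the final integration using the Hardy--Littlewood--Wiener theorem for $p>1$ are exactly how the cited proof concludes, including the correct observation that the strict inequality $p>1$ in \ainc\ is what allows factoring through $L^p$.

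The gap is that the two assertions carrying all of the weight are stated, not proved, and neither is a formality. First, the claim that $\Psi=\phi^{1/p}$ inherits \Aj\ and \Ad\ from $\phi$ does not follow mechanically from $\Psi^{-1}(x,t)=\phi^{-1}(x,t^p)$: condition \Aj\ for $\Psi$ on a ball $B$ amounts to the inequality $\beta\phi^{-1}(x,t^p)\leq\phi^{-1}(y,t^p)$ for $t\in[1,1/|B|]$, i.e.\ the \Aj-inequality for $\phi$ on the \emph{enlarged} range $[1,|B|^{-p}]$, which \Aj\ for $\phi$ does not directly provide; similarly, transferring \Ad\ by substituting $t\mapsto t^p$ naturally produces an error function $h^{1/p}\in L^p\cap L^\infty$ rather than $L^1\cap L^\infty$. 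These extensions are true but require genuine (known) lemmas, not ``ball-rescaling manipulations''. Second, and more seriously, the Jensen-type key estimate you invoke is stated under the normalization $\rho_\phi(f)\leq 1$, whereas any off-the-shelf key estimate for the weak $\Phi$-function $\Psi$ carries the normalization $\rho_\Psi(f)\leq 1$ (or $\|f\|_\Psi\leq1$). These are genuinely different hypotheses: already for $\phi(x,t)=t^p$ one has $\Psi(x,t)=t$, and $\rho_\phi(f)\leq1$ puts $f$ in the unit ball of $L^p$, which does not imply $f\in L^1$ at all. Bridging this mismatch (by localization to balls, or by splitting $f$ at height $1$ and exploiting \Az\ and \ainc) is precisely the bookkeeping that constitutes the proof in \cite{genOrlicz}, and it is absent here. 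In short, you have produced an accurate road map of the known proof with its load-bearing lemma assumed; as a self-contained proof of the theorem it is incomplete.
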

Now, we are in position to formulate and prove the main result of this section.

\begin{tw}\label{lpzmaxb}
Let $\phi\in\fiwr$ satisfies \Az, \Aj, \Ad, \ainc\ and \adec. If $f\in \wfr$, then $Mf\in\wfr$ and the inequality
\begin{align}\label{lpzmaxbtez}
|D_iMf(x)|\leq MD_if(x),
\end{align}
is satisfied for all $i=1,\ldots,n$ and for almost all $x\in\rn$.\footnote{Having in mind Theorem \ref{lfmax} we get that $M$ is bunded from $\wfr$ to $\wfr$.}
\end{tw}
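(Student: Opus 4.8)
The plan is to adapt Kinnunen's strategy to the Musielak--Orlicz setting. For a fixed radius $r>0$ I would consider the average
$$
u_r(x)=\vint_{B(x,r)}|f(z)|\,dz=\vint_{B(0,r)}|f(x+y)|\,dy,
$$
so that $Mf(x)=\sup_{r>0}u_r(x)$. First I would record the basic facts about these averages. Since every $\phi\in\fiwr$ satisfies $\ainc_1$ and here also \Az, Proposition \ref{lfzan} applied on balls (with $p=1$) yields $\lfr\hookrightarrow L^1_{\loc}(\rn)$, so that $f\in W^{1,1}_{\loc}(\rn)$ and consequently $|f|\in W^{1,1}_{\loc}(\rn)$ with $\bigl|D_i|f|\bigr|=|D_if|$ almost everywhere. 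A routine differentiation under the integral sign (justified by Fubini and local integrability) then shows that each $u_r$ is weakly differentiable with
$$
D_iu_r(x)=\vint_{B(x,r)}D_i|f|(z)\,dz,
$$
and therefore
$$
|D_iu_r(x)|\leq\vint_{B(x,r)}|D_if(z)|\,dz\leq MD_if(x)
$$
for every $i$ and almost every $x$.

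Next I would invoke the boundedness of $M$ on $\lfr$. Since $\phi$ satisfies \Az, \Aj, \Ad\ and \ainc, Theorem \ref{lfmax} gives that $M\colon\lfr\to\lfr$ is bounded; as $f,D_if\in\lfr$, we obtain $Mf\in\lfr$ and $MD_if\in\lfr$ with $\|MD_if\|_\phi\leq C\|D_if\|_\phi$. In particular the estimate above shows $D_iu_r\in\lfr$, and since $0\leq u_r\leq Mf$ pointwise, monotonicity of the modular gives $\|u_r\|_\phi\leq\|Mf\|_\phi$, so $u_r\in\lfr$. Hence $u_r\in\wfr$ for every $r$.

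I would then pass from a single radius to the supremum. Fixing an enumeration $\mathbb{Q}\cap(0,\infty)=\{r_1,r_2,\dots\}$ and using the continuity of $r\mapsto u_r(x)$, one has $Mf=\sup_k u_{r_k}$. Setting $v_k=\max_{1\leq j\leq k}u_{r_j}$ produces a nondecreasing sequence in $\wfr$ with $v_k\nearrow Mf$ pointwise, and the lattice property of Sobolev functions together with the preceding bound gives $|D_iv_k|\leq\max_{j\leq k}|D_iu_{r_j}|\leq MD_if$ almost everywhere. Consequently $\|v_k\|_\phi\leq\|Mf\|_\phi$ and $\|D_iv_k\|_\phi\leq\|MD_if\|_\phi$, so $\{v_k\}$ is bounded in $\wfr$.

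Finally I would close the argument using reflexivity. Since $\phi$ satisfies \Az, \ainc\ and \adec, the space $\wfr$ is reflexive, so a subsequence of $\{v_k\}$ converges weakly in $\wfr$; the pointwise monotone convergence $v_k\to Mf$ identifies the weak limit as $Mf$, whence $Mf\in\wfr$. To obtain \eqref{lpzmaxbtez} I would apply Mazur's lemma: suitable convex combinations of the $D_iv_k$ converge strongly in $\lfr$, hence almost everywhere along a subsequence, to $D_iMf$; since each such convex combination still satisfies $|\,\cdot\,|\leq MD_if$ pointwise, the bound survives in the limit. The main obstacle is precisely this last passage over the uncountable family of radii: one cannot differentiate $Mf$ directly, so the averages must first be placed in $\wfr$ with a \emph{uniform} gradient bound, and the nonlinearity of the supremum must then be handled through the lattice structure, reflexivity and Mazur's lemma.
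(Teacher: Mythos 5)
Your proposal follows essentially the same route as the paper: averages over balls (your $u_r$ is the paper's $|f|*h_r$), the gradient bound $|D_iu_r|\leq MD_if$, reduction to rational radii with $v_k=\max_{j\leq k}u_{r_j}$, a uniform bound in $\wfr$, and reflexivity to extract a weak limit identified with $Mf$. The only cosmetic difference is the last step: you pass the pointwise bound to the limit via Mazur's lemma, while the paper tests the weak convergence $D_iv_k\rightharpoonup D_iMf$ against characteristic functions of finite-measure sets; both are standard and correct.
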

Let us remark that in the setting of $W^{1,p}$ spaces the above theorem  has been proven by Kinnunen \cite{K}.
\begin{proof}
For $r>0$ we denote by $\chi_{B(0,r)}$ a characteristic function of the ball $B(0,r)$. Let $$h_r=\frac{1}{|B(0,r)|}\chi_{B(0,r)}.$$ Then, we have 
$$
|f|*h_r(x)=\vint_{B(x,r)}|f(y)|\,dy\leq Mf(x).
$$
This estimate and properties of convolution yields
\[
|D_i(|f|*h_r)(x)|=|(D_i|f|)*h_r(x)|\leq M D_i|f|(x)
\]
for almost all $x\in\rn$. By Theorem \ref{lfmax} $Mf, M D_i|f|\in \lfr$, and therefore $|f|*h_r\in\wfr$. Let $r_m$ be a sequence of all rational positive numbers, then
$$
Mf=\sup_m|f|*h_{r_m}.
$$
For $g_k=\max_{1\leq m\leq k}|f|*h_{r_m}$ we have
\begin{align}\label{nierb2}
|D_ig_k(x)|\leq \max_{1\leq m\leq k}\left|D_i(|f|*h_{r_m})(x)\right|\leq M(D_i|f|)(x)=M(D_if)(x)
\end{align}
for almost all $x\in\rn$ and all $k\in\n$. Taking into account (\ref{nierb2}) and Theorem \ref{lfmax}, we have 
\begin{align*}
\|g_k\|_{1,\phi}\leq\|Mf\|_{\phi}+\sum_{i=1}^n\|M(D_if)\|_{\phi}\leq C\|f\|_{1,\phi}.
\end{align*}
In this way we have proven the boudedness of the sequence $g_k$ in $\wfr$. Since $\wfr$ is reflexive, we have a subsequence of $g_k$ (still denoted as $g_k$) which is weakly convergent in $\wfr$. On the other hand, we have that $g_k$ is the increasing sequence converging pointwisely to $Mf$. Thus, we obtain $g_k\rightharpoonup Mf$ in $\wfr$. Therefore, from inequality (\ref{nierb2}) we get  
$$
\int_A D_iMf(x)\,dx\leq \int_AM(D_if)(x)\,dx \quad \text{ and } \quad \int_A D_iMf(x)\,dx \geq -  \int_AM(D_if)(x)\,dx,
$$
for every measurable set $A$ with finite measure. This finishes the proof.
\end{proof}

\section{Auxiliary results}
For $f\in L_{\loc}^1(\rn)$ and $x\in\rn$ we define (see \cite{Luiro})
$$
\R f(x)=\left\{ r\geq 0\,:\, \exists(\{r_k\}\subset (0,\infty))\,r_k\to r\wedge Mf(x)=\lim_{k\to\infty}\vint_{B(x,r_k)}|f(y)|\,dy\right\}.
$$

\begin{prop}\label{otprop}
Let $\phi\in \fiwr$ satisfies \adec\ and \Az, then for every $f\in\lfr$, the following statements hold.
\begin{itemize}
\item[(i)] For all $x\in\rn$ the set $\R f(x)$ is nonempty.
\item[(ii)] For all $x\in\rn$ and $r>0$ such that $r\in\R f(x)$  the equality $$Mf(x)=\vint_{B(x,r)}|f(y)|\, dy$$ holds.
\item[(iii)] For almost all $x\in\rn$ if $0\in\R f(x)$, then $$Mf(x)=|f(x)|.$$
\end{itemize}
\end{prop}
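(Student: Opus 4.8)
The plan is to reduce all three statements to two elementary facts: that $f$ is locally integrable and that averages of $|f|$ decay at infinity. First I would record that, since every $\phi\in\fiwr$ automatically satisfies $\ainc_1$, Proposition \ref{lfzan} applied with $p=1$ on each ball shows that every $f\in\lfr$ lies in $L^1_{\loc}(\rn)$. Consequently the average $A(r):=\vint_{B(x,r)}|f(y)|\,dy$ is well defined for $r>0$ and depends continuously on $r$ there: indeed $|B(x,r)|$ is continuous and positive, and $\int_{B(x,r')}|f|\to\int_{B(x,r)}|f|$ as $r'\to r$ by dominated convergence (the symmetric differences of balls have vanishing measure and $|f|\chi_{B(x,r+1)}\in L^1(\rn)$ dominates).

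For (i), fix $x$. If $Mf(x)=0$ then $A\equiv 0$ and any fixed radius lies in $\R f(x)$. If $Mf(x)>0$, pick a maximizing sequence $r_k>0$ with $A(r_k)\to Mf(x)$. If $\{r_k\}$ admits a finite subsequential limit $r$, that subsequence witnesses $r\in\R f(x)$ and we are done; the only remaining possibility is $r_k\to\infty$, which I rule out by proving $A(r)\to 0$ as $r\to\infty$. Given $\epsilon\in(0,1]$, Proposition \ref{propjedn}(i) (this is where \adec\ enters) gives $R_0$ with $\|f\|_{\phi,\rn\setminus B(0,R_0)}<\epsilon$. Splitting $B(x,r)$ into $B(x,r)\cap B(0,R_0)$ and $E_r:=B(x,r)\setminus B(0,R_0)$, the first piece contributes at most $\|f\|_{L^1(B(0,R_0))}/|B(x,r)|\to 0$. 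For $E_r$ I would use \Az\ together with $\ainc_1$: setting $g=f/\|f\|_{\phi,E_r}$, on the set $\{|g|>1/\beta\}$ the pointwise bound $|g|\le\tfrac{a}{\beta}\phi(\cdot,|g|)$ holds (from $\ainc_1$ with $s=1/\beta<t=|g|$ and $\phi(x,1/\beta)\ge 1$ by Proposition \ref{azrow}), whence $\int_{E_r}|f|\le\epsilon\big(\tfrac{|E_r|}{\beta}+\tfrac{a}{\beta}\rho_\phi(g)\big)$; dividing by $|B(x,r)|\ge|E_r|$ and letting $r\to\infty$ yields $\limsup_r A(r)\le\epsilon/\beta$, and $\epsilon\downarrow 0$ finishes the decay. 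Extracting a convergent subsequence of the now necessarily bounded $\{r_k\}$ produces a finite $r\in\R f(x)$.

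Statement (ii) is then immediate from the continuity of $A$ recorded above: if $r>0$ and $r\in\R f(x)$, choose $r_k\to r$ with $A(r_k)\to Mf(x)$; since $r>0$, $A$ is continuous at $r$, so $A(r_k)\to A(r)$ and therefore $Mf(x)=A(r)=\vint_{B(x,r)}|f(y)|\,dy$. Statement (iii) is the Lebesgue differentiation theorem: as $f\in L^1_{\loc}(\rn)$, almost every $x$ is a Lebesgue point of $|f|$, so $A(r)\to|f(x)|$ as $r\to 0^+$; if $0\in\R f(x)$ there is $r_k\to 0$ with $A(r_k)\to Mf(x)$, which forces $Mf(x)=|f(x)|$.

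I expect the decay estimate $A(r)\to 0$ as $r\to\infty$ in part (i) to be the only genuine obstacle, since (ii) and (iii) are soft consequences of local integrability and continuity. The delicate point there is controlling the $L^1$-average over balls of diverging measure, and this is exactly where the smallness of the tail norm from Proposition \ref{propjedn} (hence \adec) combines with the interplay of \Az\ and the automatic $\ainc_1$ to defeat the growth of $|B(x,r)|$.
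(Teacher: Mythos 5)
Your argument is correct in outline, and for part (i) it takes a genuinely different route from the paper; parts (ii) and (iii) are handled exactly as in the paper (continuity of the average in $r$, Lebesgue differentiation). For the key decay estimate $\vint_{B(x,r)}|f(y)|\,dy\to 0$ as $r\to\infty$, the paper argues by duality: it applies the H\"older inequality with $\beta\chi_{B(x,r)}$, uses that $\adec_p$ for $\phi$ yields $\ainc_{p'}$ for $\phi^*$ and that \Az\ passes to $\phi^*$, and then combines (\ref{osznorm}) with Proposition \ref{azrow} to get the quantitative bound (\ref{otin1}), $\vint_{B(x,r)}|f(y)|\,dy\leq \frac{2a^{1/p'}}{\beta}\|f\|_\phi|B(x,r)|^{-1/p}$. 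You instead work directly with $\phi$: you split $B(x,r)$ into its intersection with a fixed ball (controlled by local integrability via Proposition \ref{lfzan}) and the tail piece $E_r$, on which you combine the tail smallness from Proposition \ref{propjedn} (your entry point for \adec) with the pointwise estimate coming from $\ainc_1$ and \Az. Your route is more elementary, avoiding the conjugate-function machinery entirely, but it is only qualitative: it gives decay with no rate and no uniformity in $f$. That is not a defect for this proposition, but the paper's quantitative inequality (\ref{otin1}) is reused later, in Lemma \ref{ogrrf} and especially in Lemma \ref{lemogrgr}, where one needs $\vint_{B(x_{k_l},r_{k_l})}|f_{m_{k_l}}|\,dy\leq C\sup_m\|f_m\|_\phi\,|B(0,r_{k_l})|^{-1/p}\to 0$ uniformly over the whole sequence; with your version of (i) that uniform estimate would have to be reproved separately.

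One step in your tail estimate needs repair. You set $g=f/\|f\|_{\phi,E_r}$ and then treat $\rho_\phi(g)$ as if it were bounded (by $1$), which is what your division by $|B(x,r)|$ requires. For a merely weak $\Phi$-function the unit-ball property at norm exactly $1$ is not automatic: without left-continuity of $\phi(x,\cdot)$ one can have $\|g\|_{\phi}=1$ while $\rho_\phi(g)$ is large or even infinite, so as written the term $\rho_\phi(g)/|B(x,r)|$ is not controlled. The fix is immediate: since $\|f\|_{\phi,E_r}<\epsilon$ strictly, the infimum definition of the norm gives some $\lambda<\epsilon$ with $\int_{E_r}\phi\left(y,\tfrac{|f(y)|}{\lambda}\right)dy\leq 1$, hence by monotonicity $\int_{E_r}\phi\left(y,\tfrac{|f(y)|}{\epsilon}\right)dy\leq 1$; running your computation with $g=f/\epsilon$ yields $\int_{E_r}|f|\,dy\leq\epsilon\left(\tfrac{|E_r|}{\beta}+\tfrac{a}{\beta}\right)$ and the same conclusion $\limsup_{r\to\infty}\vint_{B(x,r)}|f(y)|\,dy\leq\epsilon/\beta$. (Alternatively, one can check that \adec, which is assumed here, forces the modular to be bounded by the \adec\ constant on the unit sphere.) With that adjustment your proof is complete.
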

\begin{proof}

 (i) The statement is obvious for $f=0$. Thus, we assume that $f\not=0$. In this case $Mf(x)>0$ for any $x \in \mathbb{R}^n$.
Next, for $x\in\rn$ there exists a sequence $\{r_n\}\subset (0,\infty)$ such that 
\begin{equation}\label{otprop1}
\lim_{n\to\infty}\vint_{B(x,r_n)}|f(y)|\, dy=Mf(x).
\end{equation}
Let $p\in (1,\infty)$ be such that $\phi$ satisfies $\adec_p$. We know that $\phi^*$ satisfies $\ainc_{p'}$ \cite[Proposition 2.4.9]{genOrlicz}, and let $a\in [1,\infty)$ be a constant from the definition of $\ainc_{p'}$. We also know that $\phi^*$ satisfies \Az \cite[Lemma 3.7.6]{genOrlicz}. Let $\beta\in (0,1]$ be a constant for $\phi^*$ from Proposition \ref{azrow}.

Let us take $r>0$ such that $|B(x,r)| \geq 1$, then by the H\"older inequality, inequality (\ref{osznorm}) and Proposition \ref{azrow} we get the string of inequalities
\begin{align}\label{otin1}
\vint_{B(x,r)}|f(y)|\,dy&\leq \frac{2}{\beta|B(x,r)|}\|f\|_\phi\|\beta\chi_{B(x,r)}\|_{\phi^*}\nonumber\\
&\leq \frac{2}{\beta|B(x,r)|}\|f\|_\phi\max\{(a\rho_{\phi^*}(\beta\chi_{B(x,r)}))^{\frac{1}{p'}},1\} \nonumber\\
&\leq\frac{2 a^{\frac 1{p'}}}\beta\|f\|_{\phi}|B(x,r)|^{\frac{1}{p'}-1}.
\end{align}
The above inequality yields that $\vint_{B(x,r)}|f(y)|\,dy\to 0$ when $r\to\infty$.
 Hence, by (\ref{otprop1}) and since $Mf(x)>0$, we get that $\{r_n\}$ is bounded. Thus, there exist a subsequence $\{r_{n_k}\}$ and $r\in [0,\infty)$ such that $r_{n_k}\to r$. Therefore, we obtain $r\in\R f(x)$. 

(ii) follows from the continouity of integral with respect to measure, and (iii) is a direct consequence of the Lebesgue differentiation Theorem, since $f\in L^1_{loc}(\mathbb{R}^n)$.
\end{proof}

\begin{lem}\label{ogrrf}
Let $\phi\in\fiwr$ satisfies \adec\,  \Az\ and let $R>0$. Then, for $f\in\lfr$ such that $f\not =0$ we have
$$
\sup\left\{r\,:\,\exists (x\in B(0,R))\, r\in\R f(x)\right\}<\infty.
$$
\end{lem}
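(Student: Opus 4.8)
The plan is to argue by contradiction. Suppose the supremum were infinite. Then there would be a sequence of radii $r_k$ belonging to the set in question with $r_k\to\infty$, and for each $k$ a point $x_k\in B(0,R)$ with $r_k\in\R f(x_k)$. By Proposition \ref{otprop}(i) this set is nonempty, so the supremum is genuinely well defined, and by Proposition \ref{otprop}(ii) each such pair satisfies the exact identity
$$
Mf(x_k)=\vint_{B(x_k,r_k)}|f(y)|\,dy .
$$
The strategy is then to show that the right-hand side tends to $0$ as $k\to\infty$, while simultaneously $Mf(x_k)$ stays bounded away from $0$ uniformly in $k$; these two facts are incompatible.

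For the upper bound I would reuse the decay estimate (\ref{otin1}) established in the proof of Proposition \ref{otprop}, which is available here since $\phi$ satisfies the same hypotheses \adec\ and \Az. Fixing $p\in(1,\infty)$ with $\phi\in\adec_p$ and the associated constants $a,\beta$ coming from $\phi^*$, that estimate reads $\vint_{B(x,r)}|f(y)|\,dy\leq\frac{2a^{1/p'}}{\beta}\|f\|_{\phi}|B(x,r)|^{1/p'-1}$ whenever $|B(x,r)|\geq 1$. Since $r_k\to\infty$ we have $|B(x_k,r_k)|\to\infty$, so for all large $k$ the hypothesis $|B(x_k,r_k)|\geq 1$ holds, and because $1/p'-1<0$ the right-hand side tends to $0$. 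Hence $Mf(x_k)\to 0$.

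The main point — and the only step needing a little care — is the uniform positive lower bound on $Mf$ over $B(0,R)$. Here I would use that $f\not=0$ to choose $\rho_0>0$ with $\int_{B(0,\rho_0)}|f(y)|\,dy>0$, together with the elementary inclusion $B(0,\rho_0)\subset B(x,\rho_0+R)$, valid for every $x\in B(0,R)$ (if $|y|<\rho_0$ and $|x|<R$ then $|y-x|\leq|y|+|x|<\rho_0+R$). Since Lebesgue measure is translation invariant, this yields, for every $x\in B(0,R)$,
$$
Mf(x)\geq\vint_{B(x,\rho_0+R)}|f(y)|\,dy\geq\frac{1}{|B(0,\rho_0+R)|}\int_{B(0,\rho_0)}|f(y)|\,dy=:c>0,
$$
a constant independent of $x$. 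In particular $Mf(x_k)\geq c>0$ for all $k$, which contradicts $Mf(x_k)\to 0$ and completes the proof.
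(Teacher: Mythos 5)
Your proof is correct and follows essentially the same route as the paper's: arguing by contradiction, bounding $Mf(x_k)=\vint_{B(x_k,r_k)}|f|\,dy$ above by the decay estimate (\ref{otin1}) (noting $|B(x_k,r_k)|\geq 1$ for large $k$), and deriving a uniform positive lower bound for $Mf$ on $B(0,R)$ from a ball on which $f$ has positive integral. The only cosmetic difference is your radius $\rho_0+R$ in place of the paper's $2\max\{R,\widetilde{R}\}$, which changes nothing.
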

\begin{proof}
Let us suppose $\sup \left\{r\,:\,\exists (x\in B(0,R))\, r\in\R f(x)\right\}=\infty$, then there exist sequences $\{r_k\}\subset (0,\infty)$ and $\{x_k\}\subset B(0,R)$ such that 
$$
r_k\to\infty\quad\textrm{and}\quad r_k\in\R f(x_k).
$$
Let $\widetilde{R}>0$ be such that $\int_{B(0,\widetilde{R})}|f|dx>0$ and define $\widehat{R}=\max\left\{R,\widetilde{R}\right\}$. Let us observe that for any $x\in B(0, R)$ we have
$$
Mf(x)\geq\vint_{B(x,2\widehat{R})}|f(y)|\,dy\geq\frac{1}{|B(0,2\widehat{R})|}\int_{B(0,\widehat{R})}|f(y)|\,dy>0.
$$
Therefore, taking into account Proposition \ref{otprop} and inequality (\ref{otin1}), we get
\begin{align*}
\frac{1}{|B(0,2\widehat{R})|}\int_{B(0,\widehat{R})}|f(y)|\,dy &\leq Mf(x_k)=\vint_{B(x_k,r_k)}|f(y)|\,dy \\
&\leq \frac{2 a^{\frac 1{p'}}}\beta\|f\|_{\phi}|B(x_k,r_k)|^{\frac{1}{p'}-1}\to 0,
\end{align*}
and this gives us a contradiction.
\end{proof}
\begin{cor}
Let $\phi\in\fiwr$ satisfies \adec\,  \Az\ and let $f\in\lfr$ such that $f\not =0$. Then, for every Lebesgue point $x$ of $f$ the set $\R f(x)$ is compact.
\end{cor}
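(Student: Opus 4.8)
The goal is to show that for a fixed Lebesgue point $x$ of a nonzero $f\in\lfr$, the set $\R f(x)$ is compact. Since $\R f(x)\subset[0,\infty)$, compactness reduces to showing the set is closed and bounded. The plan is to handle boundedness and closedness separately, leaning on the two results immediately preceding this corollary.

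First I would dispose of boundedness. By Proposition \ref{otprop}(i) the set $\R f(x)$ is nonempty, and applying Lemma \ref{ogrrf} with the choice $R=|x|+1$ (so that $x\in B(0,R)$) gives a finite supremum for $\{r:\exists(z\in B(0,R))\,r\in\R f(z)\}$. In particular this supremum bounds every element of $\R f(x)$, so $\R f(x)$ is a bounded subset of $[0,\infty)$.

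Next I would prove closedness. Let $\{s_j\}\subset\R f(x)$ with $s_j\to s$; I must show $s\in\R f(x)$. For each $j$, since $s_j\in\R f(x)$, there is a sequence $\{r_{j,k}\}_k\subset(0,\infty)$ with $r_{j,k}\to s_j$ as $k\to\infty$ and $Mf(x)=\lim_{k\to\infty}\vint_{B(x,r_{j,k})}|f(y)|\,dy$. Here I split into two cases. If $s>0$, then by Proposition \ref{otprop}(ii) each $s_j$ with $s_j>0$ satisfies $Mf(x)=\vint_{B(x,s_j)}|f(y)|\,dy$, and the average $r\mapsto\vint_{B(x,r)}|f(y)|\,dy$ is continuous in $r$ on $(0,\infty)$ (continuity of the integral with respect to the domain, as invoked in the proof of Proposition \ref{otprop}(ii)); letting $j\to\infty$ yields $Mf(x)=\vint_{B(x,s)}|f(y)|\,dy$, and then the witnessing sequence $r_k=s$ shows $s\in\R f(x)$. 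If $s=0$, then either some $s_j=0$ already lies in $\R f(x)$ and there is nothing to prove, or all $s_j>0$; in the latter case, for each $j$ pick via the defining sequence a single $r_j\in(0,\infty)$ with $|r_j-s_j|<1/j$ and $|\vint_{B(x,r_j)}|f(y)|\,dy-Mf(x)|<1/j$, so that $r_j\to 0$ and $\vint_{B(x,r_j)}|f(y)|\,dy\to Mf(x)$, exhibiting $0\in\R f(x)$ directly from the definition of $\R f(x)$.

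The one subtlety worth flagging is the role of the Lebesgue-point hypothesis. The boundedness argument and the $s>0$ branch of closedness do not use it — they work for every $x\in\rn$ — so the real content is that at a Lebesgue point the limiting value $Mf(x)$ is genuinely attained even when the witnessing radii shrink to $0$, which is exactly Proposition \ref{otprop}(iii): if $0\in\R f(x)$ then $Mf(x)=|f(x)|$. The main thing to get right is therefore the bookkeeping in the $s=0$ case, ensuring that the limit radius $0$ is attained as an honest element of $\R f(x)$ rather than merely approached; the diagonal choice of $r_j$ above accomplishes this cleanly. Assembling closedness with the boundedness from Lemma \ref{ogrrf} gives that $\R f(x)$ is closed and bounded, hence compact, completing the proof.
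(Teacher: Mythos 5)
Your proof is correct and is essentially the argument the paper intends: the corollary is stated there without proof, being an immediate consequence of Lemma \ref{ogrrf} (which gives boundedness of $\R f(x)$, exactly as in your first step) together with closedness of $\R f(x)$, which your diagonal argument extracts directly from the definition. One small observation: your argument never actually uses the Lebesgue-point hypothesis (the diagonal step handles $s>0$ as well as $s=0$, making the case split and the appeal to Proposition \ref{otprop}(ii)--(iii) dispensable), so compactness in fact holds at every $x\in\rn$; the hypothesis in the statement is harmless and matters rather for the attainment property $Mf(x)=|f(x)|$ when $0\in\R f(x)$, which is how the corollary is later used.
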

\begin{lem}\label{lemogrgr}
Let $\phi\in\fiwr$ satisfies \adec\, and \Az. Let $f, f_m\in\lfr$ be such that $f_m\to f$ in $\lfr$. If $f\not=0$ and $f_m\not = 0$, then for all $R>0$ there exists $\fR>0$ such that 
$$
\sup\left\{r\,:\,\exists (x\in B(0,R))\, r\in\R f_m(x)\right\}\leq \fR
$$
holds for all $m\in\n$.
\end{lem}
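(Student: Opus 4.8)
The plan is to rerun the argument of Lemma~\ref{ogrrf} uniformly in $m$. That proof rested on two estimates for a single function: an upper bound for averages over large balls coming from the $\lfr$ norm, and a lower bound for the maximal function on $B(0,R)$ coming from the mass of $f$ on a fixed ball. I would secure each of these uniformly in $m$ and then combine them exactly as there.

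For the upper bound there is nothing to do beyond invoking inequality~(\ref{otin1}) for each $f_m$: for every $x$ and every $r$ with $|B(x,r)|\geq 1$,
$$
\vint_{B(x,r)}|f_m(y)|\,dy\leq\frac{2a^{1/p'}}{\beta}\|f_m\|_{\phi}\,|B(x,r)|^{1/p'-1},
$$
where $a,\beta,p'$ depend only on $\phi$. Since $f_m\to f$ in $\lfr$, the sequence $\{\|f_m\|_\phi\}$ is bounded by some $K<\infty$, so the right-hand side is controlled uniformly in $m$.

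The heart of the matter, and the step I expect to be the main obstacle, is the uniform lower bound for $Mf_m$ on $B(0,R)$. I would fix $\widetilde{R}>0$ with $\int_{B(0,\widetilde{R})}|f|\,dy>0$, set $\widehat{R}=\max\{R,\widetilde{R}\}$ and $B=B(0,\widehat{R})$. The difficulty is that norm convergence in $\lfr$ does not \emph{a priori} control the $L^1$ mass $\int_B|f_m|\,dy$ from below. To bridge this gap I would use that $\phi$, being a weak $\Phi$-function, satisfies $\ainc_1$, so together with \Az\ Proposition~\ref{lfzan} gives the embedding $\lf(B)\hookrightarrow L^1(B)$ on the finite-measure set $B$. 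Combined with $\|f_m-f\|_{\phi,B}\leq\|f_m-f\|_\phi\to 0$, this yields $\int_B|f_m-f|\,dy\to 0$, whence $\int_B|f_m|\,dy\to\int_B|f|\,dy>0$. Thus there are $m_0$ and $c>0$ with $\int_B|f_m|\,dy\geq c$ for all $m\geq m_0$, and since $B\subset B(x,2\widehat{R})$ whenever $x\in B(0,R)$ this forces
$$
Mf_m(x)\geq\frac{1}{|B(0,2\widehat{R})|}\int_B|f_m(y)|\,dy\geq\frac{c}{|B(0,2\widehat{R})|}>0 .
$$

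Finally I would combine the two estimates as in Lemma~\ref{ogrrf}: for $m\geq m_0$, $x\in B(0,R)$ and $r\in\R f_m(x)$, Proposition~\ref{otprop}(ii) turns $Mf_m(x)$ into the average over $B(x,r)$, and the two displays give
$$
\frac{c}{|B(0,2\widehat{R})|}\leq\frac{2a^{1/p'}}{\beta}K\,|B(x,r)|^{1/p'-1} .
$$
As $1/p'-1<0$, this bounds $|B(x,r)|$, and hence $r$, by a constant $\fR_1$ independent of $m$ (radii with $|B(x,r)|<1$ being trivially bounded). For the finitely many indices $m<m_0$, each $f_m\neq 0$, so Lemma~\ref{ogrrf} supplies a finite bound $\fR_m$, and taking $\fR:=\max\{\fR_1,\ \max_{m<m_0}\fR_m\}$ completes the proof.
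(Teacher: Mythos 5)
Your proof is correct and relies on exactly the same ingredients as the paper's: the estimate (\ref{otin1}) applied to each $f_m$ together with the uniform bound $\sup_m\|f_m\|_{\phi}<\infty$, the lower bound for $Mf_m$ on $B(0,R)$ obtained from $L^1$-convergence on a fixed ball via Proposition \ref{lfzan}, and Lemma \ref{ogrrf} to handle the exceptional (finitely many) indices. The only difference is presentational: you argue directly and extract an explicit radius bound, whereas the paper runs the same estimates by contradiction along sequences $m_k$, $x_k$, $r_k$, splitting into the cases of $\{m_k\}$ bounded and unbounded.
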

\begin{proof}
Let us suppose that the thesis of the lemma dos not hold. 
Thus, there exist a sequence $\{m_k\}\subset\n$, a sequence of points $\{x_k\}\subset B(0,R)$ and a sequence $\{r_k\}$ such that for all $k$ we have
$$
r_k\in\R f_{m_k}(x_k)\quad\textrm{ and }\quad r_k\to\infty.
$$ 
Let us observe that if $\{m_k\}$ is bounded, then there exists a subsequence $\{m_{k_l}\}$ such that $f_{m_{k_l}}$ is constant and  $r_{k_l}\in\R f_{m_{k_l}}(x_k)$ with $r_{k_l}\to\infty$. But this contradicts to Lemma \ref{ogrrf}.

Therefore, we assume that $\{m_k\}$ is unbounded. Hence, there exists an increasing subsequence $\{m_{k_l}\}\rightarrow \infty$. 
Let us denote $M=\sup_m\|f_m\|_{\phi}$ and let $p \in (1,\infty)$ be such $\phi$ satisfies $\adec_p$. By inequality (\ref{otin1}) 
we have
\begin{align}\label{ogrfmgr}
M f_{m_{k_l}}(x_{k_l})=\vint_{B(x_{k_l},r_{k_l})}|f_{m_{k_l}}|(y)dy\leq C\|f_{m_{k_l}}\|_{\phi}|B(0,r_{k_l})|^{-1/p}\leq CM|B(0,r_{k_l})|^{-1/p}\to 0.
\end{align}
Let $\wR$ be such that $\int_{B(0,\wR)}|f|dx>0$ and define $\widehat{R}=\max\left\{R,\wR\right\}$. For all $m\in\n$ and for all $x\in B(0,R)$ we have
\begin{align}\label{ogrfmnier}
Mf_m(x)&\geq\frac{1}{B(0,2\widehat{R})}\int_{B(0,\widehat{R})}|f_m(y)|dy\nonumber\\
&\geq \frac{1}{B(0,2\widehat{R})}\int_{B(0,\widehat{R})}|f(y)|dy-\frac{1}{B(0,2\widehat{R})}\int_{B(0,\widehat{R})}|f(y)-f_m(y)|\,dy.
\end{align}
Since $f_m \rightarrow f$ in $\lfr$, by Proposition \ref{lfzan} there exists $N\in\n$ such that for all $m\geq N$ we have
$$
\frac{1}{B(0,2\widehat{R})}\int_{B(0,\widehat{R})}|f(y)-f_m(y)|\,dy\leq \frac{1}{2B(0,2\widehat{R})}\int_{B(0,\widehat{R})}|f(y)|\,dy.
$$
Taking into account the above inequality and (\ref{ogrfmnier}), we obtain for $m\geq N$ the following bound 
$$
Mf_m(x)\geq\frac{1}{2B(0,2\widehat{R})}\int_{B(0,\widehat{R})}|f(y)|\,dy.
$$
Thus, since $m_{k_l}\geq N$ for large $l$, the above inequality contradicts to (\ref{ogrfmgr}).
\end{proof}

\begin{lem}\label{lemut}
Let $\phi\in\fiwr$ satisfies \Az\ and \adec, $f\in\lfr$ such that $f \neq 0$ and $R>0$. Moreover, let $\fR>{R_0}+R$, where  
$$
R_0=\sup\left\{r\,:\,\exists (x\in B(0,R))\, r\in\R f(x)\right\},
$$ 
and let $g\in L^1_\loc(\rn)$ be such  
$$
g(x)=f(x)\textrm{ for almost all }x\in B(0,\fR)\quad\textrm{and}\quad |g(x)|\leq|f(x)|\textrm{ for almost all }x\in\rn\setminus B(0,\fR),
$$
then the following 
$$
\R f(x)=\R g(x)\qquad\textrm{and}\qquad Mf(x)=Mg(x),
$$
hold for all $x\in B(0,R)$.
\end{lem}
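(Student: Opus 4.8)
The plan is to exploit two facts: for points of $B(0,R)$ the radii realising $Mf$ are uniformly bounded by $R_0$, so the relevant averaging balls stay well inside $B(0,\fR)$, where $g$ and $f$ coincide; and the pointwise domination $|g|\le|f|$, which controls the averages of $g$ by those of $f$ everywhere. First I would record the geometric observation that drives everything: by Lemma \ref{ogrrf} we have $R_0<\infty$, and since $\fR>R_0+R$, for every $x\in B(0,R)$ and every $0\le r\le R_0$ one has $B(x,r)\subset B(0,R+r)\subset B(0,\fR)$; because $g=f$ a.e.\ on $B(0,\fR)$ this yields
$$\vint_{B(x,r)}|g(y)|\,dy=\vint_{B(x,r)}|f(y)|\,dy.$$
I would also note that $g\in\lfr$: from $|g|\le|f|$ a.e.\ and the monotonicity of $\phi(x,\cdot)$ one gets $\rho_\phi(\lambda g)\le\rho_\phi(\lambda f)$, so $g$ inherits membership from $f$.

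Next I would prove $Mf(x)=Mg(x)$ for $x\in B(0,R)$. The inequality $Mg(x)\le Mf(x)$ is immediate from $|g|\le|f|$. For the converse I would pick $r\in\R f(x)$, which is nonempty by Proposition \ref{otprop}(i) and satisfies $r\le R_0$ by the definition of $R_0$; along a sequence $r_k\to r$ with $\vint_{B(x,r_k)}|f|\to Mf(x)$ the balls $B(x,r_k)$ eventually lie in $B(0,\fR)$, so the $f$-averages equal the $g$-averages, which are bounded by $Mg(x)$. Letting $k\to\infty$ gives $Mf(x)\le Mg(x)$, hence equality.

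Finally I would establish $\R f(x)=\R g(x)$ by proving both inclusions directly from the definition. For $\R f(x)\subset\R g(x)$, take $r\in\R f(x)$ and a sequence $r_k\to r$ with $\vint_{B(x,r_k)}|f|\to Mf(x)$; since $r\le R_0<\fR-R$, the tail of $B(x,r_k)$ sits inside $B(0,\fR)$, the two families of averages coincide there, and therefore $\vint_{B(x,r_k)}|g|\to Mf(x)=Mg(x)$, i.e.\ $r\in\R g(x)$.

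The reverse inclusion is the only delicate point, and I expect it to be the crux of the argument, because the radii in $\R g(x)$ are not a priori bounded by $R_0$ (which was defined through $f$). Here the domination $|g|\le|f|$ does the work without any control on the $g$-radii: for $r\in\R g(x)$, taking $r_k\to r$ with $\vint_{B(x,r_k)}|g|\to Mg(x)=Mf(x)$, the sandwich
$$\vint_{B(x,r_k)}|g(y)|\,dy\le\vint_{B(x,r_k)}|f(y)|\,dy\le Mf(x)$$
forces $\vint_{B(x,r_k)}|f|\to Mf(x)$, whence $r\in\R f(x)$. Combining the two inclusions gives $\R f(x)=\R g(x)$ for all $x\in B(0,R)$, completing the proof.
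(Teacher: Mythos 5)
Your proposal is correct and follows essentially the same argument as the paper: both rely on the bound $r\leq R_0$ to place the averaging balls inside $B(0,\fR)$ where $f=g$, the domination $|g|\leq|f|$ to get $Mg\leq Mf$, and a squeeze between $f$-averages and $g$-averages to handle the inclusion $\R g(x)\subset\R f(x)$. The only difference is organizational — you establish $Mf(x)=Mg(x)$ as a separate preliminary step, while the paper extracts it from the same chains of inequalities that prove the two inclusions — which does not change the mathematical content.
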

\begin{proof}
Let $r\in \R f(x)$, then there exists $\{r_m\}\subset (0,\infty)$ such that 
$$
r_m \rightarrow r \qquad\textrm{and}\qquad\lim_{m \rightarrow \infty} \vint_{B(x,r_m)} |f(y)|\, dy=Mf(x).
$$
Since $r\leq R_0$, we can assume that $r_m\leq \fR-R$. Therefore, we obtain $B(x,r_m)\subset B(0,\fR)$, and we get 
$$
Mg(x)\geq\lim_{m\rightarrow \infty}  \vint_{B(x,r_m)} |g(y)| \, dy = \lim_{m \rightarrow \infty}  \vint_{B(x,r_m)} |f (y)| \, dy =  Mf(x) \geq Mg(x).
$$
This yields 
$$
\lim_{m \rightarrow \infty}  \vint_{B(x,r_m)} |g (y)| \, dy =  Mg(x).
$$
Thus, $r\in \R g(x)$ and we finished the proof of the inclusion $ \R f(x) \subset \R g(x)$.

Now, let  $\rho\in \R g(x)$ and let us take $\{\rho_m\}\subset (0,\infty)$ such that 
	$$
	\rho_m \rightarrow \rho \qquad\textrm{and}\qquad\lim_{m \rightarrow \infty} \vint_{B(x,\rho_m)}|g(y)|\, dy=Mg(x).	
	$$
	We also take\footnote{We can do it since by Proposition \ref{otprop} the set $\R f(x)$ is nonempty.}  $r\in \R f(x)$ and a sequence $\{r_n\}$ such that 
	$$
	r_m \rightarrow r \qquad\textrm{and}\qquad\lim_{m \rightarrow \infty} \vint_{B(x,r_m)}|f(y)|\, dy=Mf(x).
	$$
	Since $r\leq R_0$, we can also assume that $r_m\leq \fR-R$ for all $m$.
	We obtain 
	\begin{align*}
	Mf(x)&\geq \limsup_{m\to\infty}\vint_{B(x,\rho_m)}|f(y)|\,dy\geq \liminf_{m\to\infty}\vint_{B(x,\rho_m)}|f(y)|\,dy\geq\lim_{m\to\infty}\vint_{B(x,\rho_m)}|g(y)|\,dy=Mg(x)\\
& \geq \limsup_{m\to\infty}\vint_{B(x,r_m)}|g(y)|\,dy=\lim_{m\to\infty}\vint_{B(x,r_m)}|f(y)|\,dy=M f(x).
	\end{align*}
	It yields $\rho \in\R f(x)$, and we get $\R f(x)=\R g(x)$. The equality $Mf(x)=Mg(x)$ follows from the proven inequalities.
\end{proof}

\begin{lem} \label{lem:otoczka}
	Let $\phi\in\fiwr$ satisfies \Az, \adec and \ainc. If $f_m, f \in \lfr$  and $f_m \rightarrow f$ in $\lfr$, then for all $R>0$ and $\lambda>0$ the set $\left\{ x \in B(0,R) \, : \, \R f_m(x) \not\subset \R f(x)_{(\lambda)}  \right\}$ is measurable and\footnote{For nonempty set $A\subset \mathbb{R}^n$ and $\lambda \geq 0$ we denote $A_{(\lambda)}=\{x\in \mathbb{R}^n: dist(x,A) \leq \lambda\}$.}
	\[
	 \lim_{m \rightarrow \infty}\left| \left\{ x \in B(0,R) \, : \, \R f_m(x) \not\subset \R f(x)_{(\lambda)}  \right\} \right| = 0 .
	\]
\end{lem}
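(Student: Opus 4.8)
The plan is to compare, on balls of uniformly bounded radius, the spherical averages of $f_m$ and of $f$, and then to play the size of the ``gap to the maximum'' against the convergence $f_m\to f$. Write $u(x,r)=\vint_{B(x,r)}|f(y)|\,dy$ and $u_m(x,r)=\vint_{B(x,r)}|f_m(y)|\,dy$, with the convention $u(x,0)=|f(x)|$, so that $Mf(x)=\sup_{r>0}u(x,r)$ and, by Proposition \ref{otprop}, $\R f(x)=\{r\ge 0:u(x,r)=Mf(x)\}$ for almost every $x$. If $f=0$ then $\R f(x)=[0,\infty)$ and the set in question is empty, so assume $f\neq 0$; then $f_m\neq 0$ for all large $m$, and the finitely many vanishing terms may be discarded. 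By Lemma \ref{ogrrf} and Lemma \ref{lemogrgr} there is one radius $\fR>0$ (depending only on $R$) with $\R f(x)\cup\bigcup_m\R f_m(x)\subset[0,\fR]$ for every $x\in B(0,R)$, so only radii $r\le\fR$ are relevant. For the measurability assertion, note that since $f,f_m\in\lfr\hookrightarrow L^1_{\loc}(\rn)$ (Proposition \ref{lfzan}) the maps $(x,r)\mapsto u(x,r),u_m(x,r)$ are jointly measurable and continuous in $r$, and $Mf,Mf_m$ are measurable; the condition $\operatorname{dist}(r,\R f(x))>\lambda$ reads, for a.e.\ $x$, as $\max_{s\in[0,\fR]\cap[r-\lambda,r+\lambda]}u(x,s)<Mf(x)$, which is measurable in $(x,r)$. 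Intersecting with $\{u_m(x,r)=Mf_m(x)\}$ gives a measurable set in $B(0,R)\times[0,\fR]$ whose projection on the $x$-axis is exactly $E_m:=\{x\in B(0,R):\R f_m(x)\not\subset\R f(x)_{(\lambda)}\}$; by the measurable projection theorem $E_m$ is Lebesgue measurable.

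The next step supplies the estimate that usually comes from boundedness of $M$ on $\lfr$; since here \Aj\ and \Ad\ are not assumed, Theorem \ref{lfmax} is unavailable and I localize instead. Fix $p\in(1,\infty)$ with $\phi$ satisfying $\ainc_p$, set $h_m=(f_m-f)\chi_{B(0,R+\fR)}$ and $g_m=Mh_m$. By Proposition \ref{lfzan} on the finite-measure ball $B(0,R+\fR)$ we get $\|h_m\|_{L^p(\rn)}\le C\|f_m-f\|_\phi\to 0$, so the classical Hardy--Littlewood theorem on $L^p(\rn)$ yields $g_m\to 0$ in $L^p(\rn)$, hence in measure on $B(0,R)$. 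Crucially, for $x\in B(0,R)$ and $0<r\le\fR$ one has $B(x,r)\subset B(0,R+\fR)$, so $|u_m(x,r)-u(x,r)|\le g_m(x)$, and likewise $|f_m(x)-f(x)|\le g_m(x)$ a.e.

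Now comes the key pointwise inequality. Let $x\in E_m$ and choose $r\in\R f_m(x)$ with $\operatorname{dist}(r,\R f(x))>\lambda$; then $r\le\fR$. Picking any $\rho\in\R f(x)$ (nonempty by Proposition \ref{otprop}) and using Proposition \ref{otprop}(ii), we obtain, for a.e.\ such $x$,
\begin{align*}
Mf(x)-g_m(x)\le u_m(x,\rho)\le Mf_m(x)=u_m(x,r)\le u(x,r)+g_m(x),
\end{align*}
whence $u(x,r)\ge Mf(x)-2g_m(x)$. Consequently, if $g_m(x)<\eta/2$ then the escaping radius $r$ satisfies $u(x,r)>Mf(x)-\eta$ together with $\operatorname{dist}(r,\R f(x))>\lambda$, i.e.\ $x$ lies in
\[
G_\eta:=\Big\{x\in B(0,R):\exists\,r\in[0,\fR],\ \operatorname{dist}(r,\R f(x))>\lambda,\ u(x,r)>Mf(x)-\eta\Big\},
\]
a set measurable by the same projection argument. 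Thus $E_m\subset G_\eta\cup\{x\in B(0,R):g_m(x)\ge\eta/2\}$ and $|E_m|\le|G_\eta|+|\{x\in B(0,R):g_m(x)\ge\eta/2\}|$.

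It remains to let the two terms vanish. The sets $G_\eta$ decrease as $\eta\downarrow 0$ and $\bigcap_{\eta>0}G_\eta=\emptyset$: a point $x$ in the intersection would give radii $r_j\in[0,\fR]$ with $\operatorname{dist}(r_j,\R f(x))>\lambda$ and $u(x,r_j)\to Mf(x)$, and a convergent subsequence $r_j\to r^\ast$ would force $u(x,r^\ast)=Mf(x)$, i.e.\ $r^\ast\in\R f(x)$, contradicting $\operatorname{dist}(r^\ast,\R f(x))\ge\lambda$. Since $B(0,R)$ has finite measure, continuity of measure from above gives $|G_\eta|\to0$ as $\eta\to0$. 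Hence, given $\epsilon>0$, one first chooses $\eta$ with $|G_\eta|<\epsilon/2$ and then $m$ so large that $|\{g_m\ge\eta/2\}|<\epsilon/2$, yielding $|E_m|<\epsilon$; this proves $|E_m|\to0$. The main obstacles are the two genuinely technical points flagged above: establishing measurability of $E_m$ (which needs the measurable projection theorem rather than a naive description of the set), and circumventing the absence of an $\lfr$ maximal inequality by localizing to $B(0,R+\fR)$ so that the classical $L^p$ bound controls $u_m-u$ uniformly in $r\le\fR$.
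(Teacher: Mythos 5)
Your proof is correct, but it follows a genuinely different route from the paper's. Both arguments open the same way: discard the trivial cases $f=0$ and the finitely many $m$ with $f_m=0$, then use Lemmas \ref{ogrrf} and \ref{lemogrgr} to confine all relevant radii to one interval $[0,\fR]$. From there the paper truncates the functions themselves, $g=\chi_{B(0,R+\fR)}f$ and $g_m=\chi_{B(0,R+\fR)}f_m$, invokes Lemma \ref{lemut} to get $\R g=\R f$ and $\R g_m=\R f_m$ on $B(0,R)$, and then cites Luiro's $L^1$ version of the present lemma as a black box, which delivers both the measurability and the vanishing of the measures at once (via the set identity (\ref{meas})). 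You instead prove the localized statement from scratch: you truncate only the \emph{difference}, $h_m=(f_m-f)\chi_{B(0,R+\fR)}$, note that $|u_m(x,r)-u(x,r)|\leq Mh_m(x)$ for $x\in B(0,R)$ and $r\leq\fR$, drive $Mh_m\to 0$ in measure through Proposition \ref{lfzan} and the classical $L^p$ maximal theorem (here \ainc\ supplies $p>1$), and conclude with the ``gap set'' $G_\eta$ argument; in effect you reconstruct Luiro's own proof in a form that never needs Lemma \ref{lemut}. What this buys is self-containedness; what it costs is the measurability issue, which you must settle with the measurable projection theorem, whereas the paper inherits it from the cited lemma. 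One small imprecision: $\bigcap_{\eta>0}G_\eta$ need not be literally empty --- at a non-Lebesgue point $x$ one can have $u(x,0)=|f(x)|>Mf(x)$ while $\operatorname{dist}(0,\R f(x))>\lambda$, which places $x$ in every $G_\eta$ --- but this intersection is contained in the null set of non-Lebesgue points of $f$, so continuity of measure from above still gives $|G_\eta|\to 0$ and your conclusion stands; it would be cleaner to restrict to Lebesgue points (as you implicitly do elsewhere with the ``for a.e.\ such $x$'' qualifiers) before making that claim.
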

\begin{proof} 
If $f=0$, then the lemma is obviously true. It is easy to see that if $f\neq 0$, then $f_m \neq 0$ for large $m$. Therefore, we can assume that $f\neq 0$ and $f_m \neq 0$.
From Lemma \ref{ogrrf} and Lemma \ref{lemogrgr} there exists $\fR>0$ such that for all $x\in B(0,R)$, $m$ and $r\in\R f_m(x) \cup \R f(x)$ we have $r\leq\fR$.
We define
$$
g=\chi_{B(0,R+\fR)}f\quad\textrm{and}\quad g_m=\chi_{B(0,R+\fR)}f_m.
$$
It is obvious that $g_m\to g$ in $\lfr$ and in $L^1(\rn)$ by Proposition \ref{lfzan}. By Lemma \ref{lemut} we have 
$$
\R g(x)=\R f(x)\quad\textrm{and}\quad \R g_m(x)=\R f_m(x).
$$
for any $x\in B(0,R)$. 
Thus, it yields
\begin{align}\label{meas}
\left\{ x \in B(0,R) \, : \, \R f_m(x) \not\subset \R f(x)_{(\lambda)}  \right\}=\left\{ x \in B(0,R) \, : \, \R g_m(x) \not\subset \R g(x)_{(\lambda)}  \right\}.
\end{align}
From Lemma \ref{lem:otoczka} for $L^1$ -space, which was proven in \cite{Luiro}, we obtain 
$$
\left|\left\{ x \in B(0,R) \, : \, \R g_m(x) \not\subset \R g(x)_{(\lambda)}  \right\}\right|\to 0,
$$
and this finishes the proof. Let us stress that the measurability of the set $\left\{ x \in B(0,R) \, : \, \R f_m(x) \not\subset \R f(x)_{(\lambda)}  \right\}$ follows from (\ref{meas}) and the  measurability of the set $\left\{ x \in B(0,R) \, : \, \R g_m(x) \not\subset \R g(x)_{(\lambda)}  \right\}$, which was proven in \cite{Luiro}.
\end{proof}
\section{Continuity of the maximal operator}

The following theorem is the main result of the paper.
\begin{tw}
Let us assume that $\phi\in\fiwr$ satisfies \Az, \Aj, \Ad, \ainc\ and \adec, then the maximal operator $$M\colon\wfr\to\wfr$$ is continuous.
\end{tw}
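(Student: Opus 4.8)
The plan is to follow the strategy of Luiro \cite{Luiro} for $W^{1,p}$, systematically replacing each $L^p$-estimate by its $\lfr$-counterpart established above. Let $f_m\to f$ in $\wfr$; by Theorem \ref{lpzmaxb} all of $Mf, Mf_m$ lie in $\wfr$ with $|D_iMf|\le M(D_if)$, and I must show $Mf_m\to Mf$ in $\wfr$. The zeroth-order part is immediate: since $|Mf_m-Mf|\le M(f_m-f)$ pointwise and $M$ is bounded on $\lfr$ (Theorem \ref{lfmax}), one gets $\|Mf_m-Mf\|_\phi\le C\|f_m-f\|_\phi\to 0$. So everything reduces to proving $D_iMf_m\to D_iMf$ in $\lfr$ for each $i$. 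Because $Mf=M|f|$ and, by Proposition \ref{lfzan}, $\lfr\hookrightarrow L^p_{\loc}(\rn)$ for the exponent $p>1$ of \ainc, so that $\wfr\hookrightarrow W^{1,p}_{\loc}(\rn)$, I may invoke the (purely local, a.e.) pointwise differentiation result of \cite{Luiro}: for almost every $x$ and \emph{every} $r\in\R f(x)$,
\[
D_iMf(x)=\vint_{B(x,r)}D_i|f|(y)\,dy,
\]
with the convention $\vint_{B(x,0)}h:=h(x)$ when $0\in\R f(x)$. I also use that $f\mapsto|f|$ is continuous on $\wfr$, so $D_i|f_m|\to D_i|f|$ in $\lfr$.

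Fix $i$ and, for each $m$, choose a measurable selection $r_m(x)\in\R f_m(x)$ (existence as in \cite{Luiro}, e.g.\ $r_m(x)=\min\R f_m(x)$). Then $D_iMf_m(x)=\vint_{B(x,r_m(x))}D_i|f_m|$ a.e., and I split
\[
D_iMf_m(x)-D_iMf(x)=\underbrace{\vint_{B(x,r_m(x))}\big(D_i|f_m|-D_i|f|\big)\,dy}_{=:I_m(x)}+\underbrace{\Big(\vint_{B(x,r_m(x))}D_i|f|\,dy-D_iMf(x)\Big)}_{=:II_m(x)}.
\]
The term $I_m$ is harmless: $|I_m(x)|\le M\big(D_i|f_m|-D_i|f|\big)(x)$, so $\|I_m\|_\phi\le C\|D_i|f_m|-D_i|f|\|_\phi\to0$ by Theorem \ref{lfmax}. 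The whole difficulty is in $II_m$, where the average of the \emph{fixed} function $g:=D_i|f|$ over the moving ball $B(x,r_m(x))$ is compared with $D_iMf(x)=\vint_{B(x,r')}g$, valid for \emph{any} $r'\in\R f(x)$.

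To treat $II_m$ I localize and split. Given $\epsilon>0$, Proposition \ref{propjedn}(i) gives $R>0$ with $\|M(D_if)\|_{\phi,\rn\setminus B(0,R)}<\epsilon$; together with $\|M(D_i(f_m-f))\|_\phi\le C\|f_m-f\|_{1,\phi}$ this controls the difference on $\rn\setminus B(0,R)$ uniformly for large $m$. On $B(0,R)$, Lemmas \ref{ogrrf} and \ref{lemogrgr} bound all relevant radii by one $\fR$, so Lemma \ref{lem:otoczka} applies: for $\lambda>0$ the bad set $E_m:=\{x\in B(0,R):\R f_m(x)\not\subset\R f(x)_{(\lambda)}\}$ satisfies $|E_m|\to0$. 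On $E_m$ I bound crudely $|II_m(x)|\le 2M(D_if)(x)$, and since $M(D_if)\in\lfr$ is fixed while $|E_m|\to0$, Proposition \ref{propjedn}(ii) gives $\|II_m\chi_{E_m}\|_\phi\to0$. On the good set $B(0,R)\setminus E_m$, the selected $r_m(x)$ lies within $\lambda$ of some $r'(x)\in\R f(x)$ (compact), for which $\vint_{B(x,r'(x))}g=D_iMf(x)$.

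The good-set estimate is the main obstacle, since ball-averaging is \emph{not} uniformly continuous in the radius down to $r=0$ for a merely $\lfr$ function. I resolve this by density: pick continuous, compactly supported $\tilde g$ with $\|g-\tilde g\|_\phi<\epsilon$ (admissible as $\phi$ satisfies \adec), and write, with $\vint_{B(x,0)}h:=h(x)$,
\[
II_m(x)=\Big(\vint_{B(x,r_m(x))}\tilde g-\vint_{B(x,r'(x))}\tilde g\Big)+\vint_{B(x,r_m(x))}(g-\tilde g)-\vint_{B(x,r'(x))}(g-\tilde g).
\]
The change of variables $y=x+sz$ yields $|\vint_{B(x,s)}\tilde g-\vint_{B(x,t)}\tilde g|\le\omega_{\tilde g}(|s-t|)$ uniformly in $x$ (with $\omega_{\tilde g}$ the modulus of continuity of $\tilde g$, the case $t=0$ included), so the first bracket is at most the constant $\omega_{\tilde g}(\lambda)$; the other two terms are each at most $M(g-\tilde g)(x)$ since $|h|\le Mh$ a.e. Hence
\[
\|II_m\chi_{B(0,R)\setminus E_m}\|_\phi\le\omega_{\tilde g}(\lambda)\,\|\chi_{B(0,R)}\|_\phi+2C\|g-\tilde g\|_\phi,
\]
and choosing $\lambda$ small makes the first summand $<\epsilon$. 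Assembling the tail estimate, $\|I_m\|_\phi$, $\|II_m\chi_{E_m}\|_\phi$ and the good-set bound, then letting $m\to\infty$ and $\epsilon\to0$, gives $\|D_iMf_m-D_iMf\|_\phi\to0$, which completes the proof. The technical points to verify carefully are the transfer of Luiro's pointwise derivative formula through $\wfr\hookrightarrow W^{1,p}_{\loc}(\rn)$, the measurability of the selection $r_m$, and the continuity of $f\mapsto|f|$ on $\wfr$.
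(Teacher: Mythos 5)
Your proposal follows the same Luiro-style architecture as the paper: reduce to convergence of the gradients, represent $D_iMf$ through averages of $D_i|f|$ over balls with radii in $\R f(x)$, split $D_iMf_m-D_iMf$ into a term dominated by the maximal function of a difference of gradients plus a moving-radius term for the fixed function $D_i|f|$, localize with Proposition \ref{propjedn}, and kill the bad set coming from Lemma \ref{lem:otoczka} by Proposition \ref{propjedn}(ii). The one genuinely different ingredient is your good-set estimate: the paper quantifies the a.e. uniform continuity of $r\mapsto u_x(r)$ by an Egorov-type argument (the nested sets $C_\delta$ shrink to a null set, so some $C_{\delta_0}$ has measure $<\lambda/2$, and off it the oscillation is below the threshold $\epsilon/\|\chi_{B(0,R)}\|_\phi$), whereas you approximate $g=D_i|f|$ by a continuous compactly supported $\tilde g$ and use the genuine modulus of continuity $\omega_{\tilde g}(\lambda)$ plus two error terms $M(g-\tilde g)$. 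This variant is correct: density of $C_{\mathrm{c}}(\rn)$ in $\lfr$ holds under \Az\ and \adec, and it trades the measure-theoretic choice of a good scale $\delta_0$ for a quantitative bound, at the price of invoking the density theorem and Theorem \ref{lfmax} once more. Your explicit appeal to the continuity of $f\mapsto|f|$ on $\wfr$ is also a point where you are more careful than the paper, whose functions $u_x$ are written with $D_if$ while the derivative formula involves $D_i|f|$; that continuity is true under \adec\ (write $D_i|f_m|-D_i|f|=\operatorname{sgn}(f_m)(D_if_m-D_if)+(\operatorname{sgn}(f_m)-\operatorname{sgn}(f))D_if$ and use dominated convergence in $\lfr$), but it does require the proof you only promise.

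The step that, as written, is not a valid inference is your transfer of Luiro's derivative formula. You call it a ``purely local'' result and invoke it via $\wfr\hookrightarrow W^{1,p}_{\loc}(\rn)$. It is not local: both $Mf$ and the set $\R f(x)$ depend on the global behaviour of $f$, and the radii $r\in\R f(x)$ over which the formula averages can be large; applying Luiro's theorem to a truncation of $f$ yields a formula for the maximal function and the $\R$-sets of the truncation, not of $f$. The correct bridge is exactly the paper's Lemma \ref{wz}: fix $R>0$, use Lemma \ref{ogrrf} to get $R_0=\sup\{r:\ r\in\R f(x),\ x\in B(0,R)\}<\infty$, take a cutoff $\psi\equiv 1$ on $B(0,\fR)$ with $\fR>R+R_0$, so that $g=f\psi\in W^{1,p}(\rn)$ globally by Proposition \ref{lfzan}; then Lemma \ref{lemut} gives $Mf=Mg$ and $\R f(x)=\R g(x)$ on $B(0,R)$, and since every relevant ball $B(x,r)$, $r\in\R f(x)$, $x \in B(0,R)$, lies inside $B(0,\fR)$ where $g=f$, Luiro's formula for $g$ becomes the formula for $f$ on $B(0,R)$. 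You have the needed lemmas on the table (you already use Lemmas \ref{ogrrf} and \ref{lemogrgr} for the radii bounds), so this is a gap of justification rather than of missing tools, but the step must be repaired along these lines for the proof to stand.
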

\begin{proof}
First of all we shall prove the formula for weak derivatives of the maximal function.
\begin{lem}\label{wz}
Let us assume that $\phi\in\fiwr$ satisfies \Az, \Aj, \Ad, \ainc\ and \adec. 	If $f \in \wfr$,  then for almost all $x \in \mathbb{R}^n$ and for all $i=1,\ldots,n$ we have
	\begin{align}\label{wzteza}
	\begin{split}
		 D_iM f(x) &= \vint_{B(x,r)} D_i|f|(y)\, dy\textrm{ for all $r \in \R f(x)$, $r>0$ and}\\ 
		  D_i Mf(x) &= D_i|f|(x) \textrm{ if $0 \in \R f(x)$.}
		  \end{split}
	\end{align}
\end{lem}
\begin{proof}
We can assume that $f \neq 0$. Let $R>0$ and let $\fR=R+R_0+1$, where
$$
R_0=\sup\left\{r\,:\,\exists (x\in B(0,R))\, r\in\R f(x)\right\}.
$$
 Let us take $\psi\in C^{\infty}_{\textnormal{c}}(\rn)$ such that $0\leq\psi\leq 1$ and $\psi(x)=1$ for all $x\in B(0,\fR)$.

Next, we define $g=f\psi$. Let $p \in (1, \infty)$ be such that $\phi$ satisfies $\ainc_p$. It is obvious that $g\in\wfr$ and $g\in W^{1,p}(\rn)$ by Proposition \ref{lfzan}. By Lemma \ref{lemut}, for $x\in B(0,R)$ we have
\begin{align}\label{wzrow}
\R f(x)=\R g(x)\quad\textrm{and}\quad Mf(x)=Mg(x).
\end{align}
Since Lemma \ref{wz} holds for $W^{1,p}$ spaces (see \cite{Luiro}), we have that (\ref{wzteza}) holds for $g$. Therefore, having in mind (\ref{wzrow}) and the form of $g$ we easily get the thesis.
\end{proof}

Let $f_m\to f$ in $\wfr$. We shall show that $Mf_m\to Mf$ in $\wfr$. Due to Theorem \ref{lfmax}, it is enough to show that $D_if_m\to D_if$ in $\lfr$ for all $1\leq i\leq n$.

Let us fix $\epsilon>0$, then by Proposition \ref{propjedn} there exist $R>0$ and $\lambda >0$ such that $\|MD_if\|_{\pc,\rn\setminus B(0,R)}<\epsilon/2$ and 
$\|MD_if\|_{\pc,A}<\epsilon/2$ whenever $|A|<\lambda$. For every $x\in\rn$ we define functions $u_x, u^m_x \colon [0,\infty)\to \r$ as follows
\begin{align*}
u_x(r)=\begin{cases}
\vint_{B(x,r)}D_if&\textrm{ if }r>0,\\
D_if(x)&\textrm{ if }r=0
\end{cases}
\quad
\quad
u^m_x(r)=\begin{cases}
\vint_{B(x,r)}D_i f_m&\textrm{ if }r>0,\\
D_if_m(x)&\textrm{ if }r=0.
\end{cases}
\end{align*}
By the Lebesgue differentiation Theorem and since $\lim_{r\rightarrow \infty} u_x (r)=0$, we have that $u_x$ is uniformly continuous for almost all $x\in\rn$ and $u_x$ is continuous on $(0,\infty)$ for all $x\in\rn$. In other words the set 
\[
\mathcal{N} =  \left\{ x \in B(0,R)\, : \,  u_x \text{ is not uniformly continuous}\right\}
\]
is a null set. 
Next, for $\delta >0$ we define
\[
C_\delta = \left\{ x \in B(0,R) \,:\, \exists ( r_1,r_2\in [0,\infty))\ |r_1-r_2|\leq \delta \textrm{ and }|u_x(r_1) - u_x(r_2)| > \epsilon/ \|\chi_{B(0,R)}\|_{\phi}\right\}.
\]
Since $\bigcap\limits_{\delta>0} C_\delta \subset \mathcal{N}$ and $C_{\delta_1} \subset C_{\delta_2}$ when $\delta_1 < \delta_2$, we can choose\footnote{Since 
$ C_\delta=\bigcup_{\substack{ r_1,r_2\in [0,\infty)\cap\mathbb{Q} \\ |r_1-r_2|\leq\delta}}\left\{ x \in B(0,R) \,:\, |u_x(r_1) - u_x(r_2)| >\epsilon/\|\chi_{B(0,R)}\|_{\phi}\right\}
$, we have that $C_{\delta}$ is measurable.
} $\delta_0$ such that $|C_{\delta_0}| < \frac{\lambda}{2}$. Due to Lemma \ref{lem:otoczka} there exists $m_0$ such that
\[
\left| \left\{ x \in B(0,R) \, : \, \R f_m(x) \not\subset \R f(x)_{(\delta_0)}  \right\} \right| < \frac{\lambda}{2}, \quad \text{ for } m \geq m_0.
\]
We denote the set in the above formula by $C^m$. Let us also denote by $\Psi$ a set of $x\in\rn$ such that (\ref{wzteza}) is satisfied for $f$ and $f_m$ for all $m$ and $x$ is the Lebesgue point of $f, f_m, D_i f$ and $D_i f_m$ for all $m$. From Lemma \ref{wz} and the Lebesgue differentiation Theorem we have $|\rn\setminus\Psi|=0$.

Now, let us take $m \geq m_0$, $x\in \Psi$, then for $r_1 \in \R f_m(x)$ and $r_2 \in \R f(x)$ we have
\begin{align}\label{conin1}
\left| D_iMf_m(x) - D_iMf(x)  \right| & = \left|u_x^m(r_1) - u_x(r_2)  \right| \nonumber \\
& \leq  \left| u_x^m(r_1) - u_x(r_1)   \right|   + \left| u_x(r_1) - u_x(r_2)   \right| \nonumber\\
& \leq M\left( D_if_m - D_if \right)(x) +  \left|u_x(r_1) - u_x(r_2) \right|.  
\end{align}
Next, if $x \in (\mathbb{R}^n \setminus (C_{\delta_0} \cup C^m\cup( \mathbb{R}^n \setminus B(0,R)))\cap \Psi$ we can find $r_1 \in \R f_m(x)$ and $r_2 \in \R f(x)$ such, that\footnote{The compactness of $\R f(x)$ is used in this place.} $|r_1-r_2| \leq \delta_0$. Therefore, by the definition of $C_{\delta_0}$, we obtain
\[
\left| u_x(r_1) - u_x(r_2) \right| \leq \epsilon/\|\chi_{B(0,R)}\|_{\phi} .
\]
If $x \in  (C_{\delta_0} \cup C^m\cup( \mathbb{R}^n \setminus B(0,R))\cap \Psi$, then we use the following bound
\[
\left|  u_x(r_1) - u_x(r_2) \right| \leq 2MD_if(x).
\]
Combing those inequalities with (\ref{conin1}) we get
\begin{align*}
\left\| D_iMf_m - D_iMf  \right\|_{\phi,\mathbb{R}^n} &\leq \left\|M(D_if_m - D_if)  \right\|_{\phi,\mathbb{R}^n} + \| \tfrac{\epsilon}{\|\chi_{B(0,R)}\|_\phi } \|_{\phi,B(0,R)} \\
& +  \left\|2 MDf \right\|_{\phi,C_{\delta_0}\cup C^m} + \left\|2 MDf \right\|_{\phi,\rn\setminus B(0,R)} \\
& \leq   C\left\|D_if_m - D_if\right\|_{\phi,\mathbb{R}^n} + 3 \epsilon.
\end{align*}
Finally, since $D_if_m\to D_if$ in $\wfr$, the proof follows. 
\end{proof}
\bibliographystyle{plain}
\bibliography{conmaxopbib}
\end{document}